\newtheorem{theorem}{Theorem}[section]
\newtheorem{corollary}[theorem]{Corollary}
\newtheorem{lemma}[theorem]{Lemma}
\theoremstyle{definition}
\newtheorem{definition}[theorem]{Definition}
\newtheorem{example}[theorem]{Example}
\newtheorem{remark}[theorem]{Remark}
\numberwithin{equation}{section}
\begin{document}
\title[Follower, predecessor, and extender entropies]{Follower, predecessor, and extender entropies}

\begin{abstract}

Using the follower/predecessor/extender set sequences defined in \cite{french1}, we define quantities which we call the follower/predecessor/extender entropies, which can be associated to any shift space. We analyze the behavior of these quantities under conjugacies and factor maps, most notably showing that extender entropy is a conjugacy invariant and that having follower entropy zero is a conjugacy invariant. We give some applications, including examples of shift spaces with equal entropy which can be distinguished by extender entropy, and examples of shift spaces which can be shown to not be isomorphic to their inverse by using follower/predecessor entropy.

\end{abstract}

\date{}
\author{Thomas French and Ronnie Pavlov}
\address{Thomas French and Ronnie Pavlov\\
Department of Mathematics\\
University of Denver\\
2390 S. York St.\\
Denver, CO 80208}
\email{Thomas.French@du.edu}
\email{rpavlov@du.edu}
\urladdr{http://www.math.du.edu/$\sim$rpavlov}
\thanks{The second author gratefully acknowledges the support of NSF grant DMS-1500685.}

\subjclass[2010]{37A35, 37B10}
\maketitle

\section{Introduction}
\label{intro}

Topological entropy is one of the most well-studied and useful invariants of topological dynamical systems. In the specific case of (one-dimensional) shift spaces, topological entropy is precisely the exponential growth of the number of words in the language of the shift space, as the length $n$ grows. 

In \cite{french1}, the first author used the classical follower/predecessor sets and the extender sets introduced in \cite{KM} to create analogues of the word complexity function, called the follower set sequence, predecessor set sequence, and extender set sequence. In this work, we use these sequences to create analogues of entropy, which we call follower/predecessor/extender entropy. 

These turn out to have some interesting properties in terms of behavior under conjugacy/factor maps.
In particular, extender entropy is a conjugacy invariant (Theorem~\ref{extconjinv}), but it need not decrease under factors (Theorem~\ref{entfact}). Follower/predecessor entropy is not a conjugacy invariant (Theorem~\ref{follnotconjinv}), but having zero follower/predecessor entropy is a conjugacy invariant (Corollary~\ref{0follentinv}).

The properties we prove for extender entropy are reminiscent of the (left/right) constraint entropy defined in \cite{buzzi}, which is also a conjugacy invariant which may increase under factor maps. 
We define constraint entropy in Section~\ref{defns}, and there compare and contrast it with extender entropy.

Finally, we give two applications of our results. The first is a way to show that a shift space is not conjugate to its inverse. Specifically, our results show that any shift space with zero follower entropy and positive predecessor entropy cannot be conjugate to its inverse. We then use results from \cite{french2} and \cite{schmeling} to show that for all $\beta$ outside a meager set of Lebesgue measure zero, the $\beta$-shift $X_\beta$ is not conjugate to its inverse (Theorem~\ref{notinv}). 

Our second application is that extender entropy can theoretically help to distinguish between shift spaces with the same topological entropy; in particular, we prove that the only restrictions on extender and topological entropy is that the first must be less than or equal to the second (Theorem~\ref{realization}).

\section{Definitions and preliminaries}

\label{defns}

\begin{definition}
A \textbf{topological dynamical system} is a pair $(X, T)$ where $X$ is a compact metric space and $T: X \rightarrow X$ is continuous. If $T$ is also a homeomorphism, then $(X,T)$ is \textbf{invertible}.
\end{definition}

In this paper, the topological dynamical systems considered will be symbolically defined. Let $A$ denote a finite set, which we will refer to as our alphabet. 

\begin{definition}
A \textbf{word} over $A$ is a member of $A^n$ for some $n \in \mathbb{N}$. We denote the length of a word $w$ by $|w|$ and the set of all words on $A$ by $A^*$.
\end{definition}

\begin{definition} For any words $v \in A^n$ and $w \in A^m$, we define the \textbf{concatenation} $vw$ to be the pattern in $A^{n+m}$ whose first $n$ letters are the letters forming $v$ and whose next $m$ letters are the letters forming $w$.
\end{definition}

\begin{definition} A word $w$ is a \textbf{prefix} of a right-infinite sequence $z$ if the first $|w|$-many letters of $z$ are the letters forming $w$. We denote the n-letter prefix of a sequence $z$ by $(z)_n$.
\end{definition}

\begin{definition}
A \textbf{shift space} is a set $X \subset A^{G}$ ($G = \mathbb{Z}$ or $\mathbb{N}$) which is closed and invariant under the left shift map $\sigma$ defined by $(\sigma x)(n) = x(n+1)$ for $n \in G$. A shift space is called \textbf{two-sided} if $G = \mathbb{Z}$ and \textbf{one-sided} if $G = \mathbb{N}$. All shift spaces we consider are two-sided unless explicitly specified otherwise.
\end{definition}

For any shift space $X$, $(X, \sigma)$ is a topological dynamical system, and it is invertible if $X$ is a two-sided shift space. In that case, $(X, \sigma^{-1})$ is also a topological dynamical system.

\begin{definition} 
The \textbf{language} of a shift space $X$, denoted by $L(X)$, is the set of all words which appear in points of $X$. For any finite $n \in \mathbb{N}$, $L_n(X) := L(X) \cap A^n$, the set of words in the language of $X$ with length $n$. The \textbf{complexity sequence} of a shift space $X$ is $\{|L_n(X)|\}_{n \in \mathbb{N}}$. That is, the complexity sequence is the sequence which records the number of words of length $n$ appearing in some point of $X$ for every length $n$.
\end{definition}

\begin{definition}
For any shift space $X$ over the alphabet $A$, and any word $w$ in the language of $X$, we define the \textbf{follower set of $w$ in $X$}, $F_X(w)$, to be the set of all finite words $u \in L(X)$ such that the word $wu$ occurs in some point of $X$. The \textbf{predecessor set of $w$ in $X$}, $P_X(w)$, is defined to be the set of all finite words $s \in L(X)$ such that the word $sw$ occurs in some point of $X$. In some works, the follower and predecessor sets have been defined to be the set of all one-sided infinite sequences (in $A^{\mathbb{N}}$ or $A^{-\mathbb{N}}$ for followers and predecessors, respectively) which may follow/precede $w$. This definition is equivalent for followers, and in the case of a two-sided shift, for predecessors as well. For a one-sided shift, of course, no infinite sequence may precede a word $w$. The results of this paper will apply for either definition in any case where that definition makes sense.
\end{definition}

\begin{definition}
For any shift space $X$ over the alphabet $A$, and any word $w$ in the language of $X$, we define the \textbf{extender set of w in $X$}, $E_X(w)$, to be the set of all pairs $(s, u)$ where $s, u \in L(X)$ and the word $swu$ occurs in some point of $X$. Again, a definition replacing finite words with infinite sequences is equivalent in the two-sided case.
\end{definition}

\begin{remark}\label{proj}
For any word $w \in L(X)$, define the surjective projection function $f_w: E_X(w) \rightarrow F_X(w)$ by $f(s,u) = u$. 
Any two words $w, v$ with the same extender set would have the property then that $f_w(E_X(w))= f_v(E_X(v))$, that is, that $w$ and $v$ have the same follower set. Similarly, words which have the same extender set also have the same predecessor set.
\end{remark}

\begin{definition}
For any positive integer $n$, define the sets $F_X(n) = \{F_X(w) \ | \ w \in L_{n}(X)\}$, $P_X(n) = \{P_X(w) \ | \ w \in L_n(X)\}$, and $E_X(n) = \{E_X(w) \ | \ w \in L_n(X)\}$.
\end{definition}

\begin{definition}
Given a shift space $X$, the \textbf{follower set sequence of $X$} is the sequence $\{|F_X(n)|\}_{n \in \mathbb{N}}$, the \textbf{predecessor set sequence of $X$} is the sequence $\{|P_X(n)|\}_{n \in \mathbb{N}}$, and the \textbf{extender set sequence of $X$} is the sequence $\{|E_X(n)|\}_{n \in \mathbb{N}}$. These sequences measure the number of distinct follower/predecessor/extender sets of words of length $n$ in $X$.
\end{definition}

By Remark~\ref{proj}, for any $X$ and $n$, 
\begin{equation}\label{count}
|E_X(n)| \geq |P_X(n)| \textrm{ and } |E_X(n)| \geq |F_X(n)|.
\end{equation}

\begin{example}
The \textbf{full shift} on the alphabet $A$ is just the shift space $X = A^{\mathbb{Z}}$. Then any word $w \in L(X)$ may be followed legally by any word $v \in L(X)$, and thus the follower sets of all words are the same. Hence there is only one follower set in a full shift. Similarly, there is only one predecessor and one extender set in a full shift. Then $\{|F_X(n)|\}_{n \in \mathbb{N}} = \{|P_X(n)|\}_{n \in \mathbb{N}} = \{|E_X(n)|\}_{n \in \mathbb{N}} = \{1, 1, 1, ...\}$.
\end{example}

\begin{example}
The \textbf{even shift} is the sofic shift space $X$ with alphabet $\{0,1\}$ defined by forbidding runs of $0$ symbols of odd length between two nearest $1$ symbols. It is a simple exercise to show that the even shift has exactly three follower sets: $F_X(0)$, $F_X(1)$, and $F_X(10)$. It also has six extender sets: $E_X(0)$, $E_X(1)$, $E_X(00)$, $E_X(01)$, $E_X(10)$, and $E_X(010)$. The follower set sequence of the even shift is $\{|F_X(n)|\}_{n \in \mathbb{N}} = \{2, 3, 3, 3, ...\}$ (the predecessor set sequence is the same by symmetry of $X$) and the extender set sequence is $\{|E_X(n)|\}_{n \in \mathbb{N}} = \{2, 5, 6, 6, ...\}$.
\end{example}

\begin{example}\label{CFS}
The \textbf{context-free shift} $C$ is the shift space with alphabet $\{a, b, c\}$ consisting of all $x$ where any two nearest $c$ symbols must have a word of the form $a^nb^n$ between them. 
The follower set of a word in the context-free shift depends only on its final letter, the location of the final $c$ appearing in the word (if any), and the location after that $c$ where the first $b$ appears (if any). Thus we have the upper bound $|F_C(n)| < 3n^2$. Similarly, the extender set of a word in the context-free shift depends only on its first and last letters, the locations of the first and last $c$ in the word (if any), as well of the locations of the last $a$ before the first $c$, and the first $b$ after the final $c$ (if any), yielding the upper bound $|E_C(n)| < 9n^4$.
\end{example}

\begin{definition}
A \textbf{factor map} from a topological dynamical system $(X,T)$ to a topological dynamical system 
$(Y,S)$ is a surjective map $\varphi: X \rightarrow Y$ where for any $x \in X$, $S\varphi(x) =  \varphi(Tx)$. If $\varphi$ is a bijection, then it is called a \textbf{conjugacy} and we say that $(X, T)$ and $(Y, S)$ are \textbf{conjugate}.
\end{definition}

The well-known Curtis-Lyndon-Hedlund theorem states that a factor map $\phi$ from a two-sided shift space $(X, \sigma)$ to another two-sided shift space $(Y, \sigma)$ must be a so-called sliding block code, i.e. there exists $r$ (called the \textbf{radius} of $\phi$) so that $(\phi(x))(i)$ depends only on $x(i-r) \ldots x(i + r)$ for all $x \in X$ and $i \in \mathbb{Z}$. A factor map with radius $0$ is called \textbf{$1$-block}, because it is induced by a map between the alphabets of the shifts.

\begin{definition}
Given a shift space $X$, the \textbf{topological entropy of $X$} is given by $$h(X) = \lim_{n \rightarrow \infty} \frac{1}{n} \log{|L_n(X)|}.$$ It is well-known that topological entropy is a conjugacy invariant.
\end{definition}

\begin{definition}
Given a shift space $X$, the \textbf{extender entropy of $X$} is given by $$h_E(X) = \lim_{n \rightarrow \infty} \frac{1}{n} \log{|E_X(n)|}.$$ The existence of this limit is given by Theorem~\ref{extentexists}.
\end{definition}

\begin{definition}
Given a shift space $X$, the \textbf{follower entropy of $X$} is given by $$h_F(X) = \limsup_{n \rightarrow \infty} \frac{1}{n} \log{|F_X(n)|}.$$ The \textbf{predecessor entropy} $h_P(X)$ is defined in an analogous fashion using predecessor sets. 
\end{definition}

By (\ref{proj}), for any $X$, $h_E(X) \geq h_P(X)$ and $h_E(X) \geq h_F(X)$.

\begin{remark}
It is clear from the upper bounds observed in Example \ref{CFS} that for the context-free shift $C$, we have $h_F(C) = h_P(C) = h_E(C) = 0$.
\end{remark}

\begin{definition}
For any two-sided shift space $X$, define $\widehat{X}$ to be its \textbf{reversed shift}, i.e. $\ldots x(-1) x(0) x(1) \ldots \in \widehat{X}$ iff $\ldots x(1) x(0) x(-1) \ldots \in X$. 
\end{definition}

The following lemmas are immediate from definitions and are presented without proof.

\begin{lemma}\label{reverse}
For any two-sided shift space $X$, $(X, \sigma^{-1})$ and $(\widehat{X}, \sigma)$ are conjugate.
\end{lemma}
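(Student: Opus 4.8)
The plan is to exhibit an explicit conjugacy between the two systems $(X, \sigma^{-1})$ and $(\widehat{X}, \sigma)$, namely the "reversal" map. First I would define the map $\rho: X \to \widehat{X}$ which sends a point $x \in X$ to the point $\widehat{x}$ whose coordinates are reflected about the origin, i.e. $\widehat{x}(n) = x(-n)$ for all $n \in \mathbb{Z}$. By the defining property of the reversed shift $\widehat{X}$, a bi-infinite sequence lies in $\widehat{X}$ precisely when its reflection lies in $X$, so $\rho$ is a well-defined bijection between $X$ and $\widehat{X}$, with inverse given by the same reflection formula.

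The main verification is that $\rho$ intertwines the dynamics correctly: I would check that $\rho \circ \sigma^{-1} = \sigma \circ \rho$ as maps from $X$ to $\widehat{X}$. This is a direct computation on coordinates. Applying the left-hand side to a point $x$ and reading off the $n$th coordinate gives $(\rho(\sigma^{-1} x))(n) = (\sigma^{-1} x)(-n) = x(-n-1)$, while the right-hand side gives $(\sigma(\rho(x)))(n) = (\rho(x))(n+1) = x(-(n+1)) = x(-n-1)$. Since these agree for every $n$, the two maps coincide, which is exactly the intertwining condition $S\varphi = \varphi T$ in the definition of a factor map (with $T = \sigma^{-1}$ and $S = \sigma$).

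It remains only to confirm that $\rho$ is a homeomorphism, so that the intertwining bijection is genuinely a conjugacy of topological dynamical systems. Continuity is immediate because reflection about the origin is a sliding-block-type coordinate map: each output coordinate depends on a single input coordinate, so $\rho$ is continuous in the product topology, and the same reasoning applies to $\rho^{-1}$. I do not expect any serious obstacle here; the entire content is the coordinate bookkeeping showing reflection converts forward shifting into backward shifting, which is precisely why the lemma is stated as immediate from the definitions.
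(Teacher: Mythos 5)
Your proof is correct and is exactly the argument the paper has in mind; the paper omits the proof as ``immediate from definitions,'' and your reversal map $\rho(x)(n)=x(-n)$ with the coordinate check $(\rho(\sigma^{-1}x))(n)=x(-n-1)=(\sigma(\rho(x)))(n)$ is the intended verification. Nothing is missing.
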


\begin{lemma}\label{switch}
For any two-sided shift space $X$, $h_F(X) = h_P(\widehat{X})$ and $h_P(X) = h_F(\widehat{X})$.
\end{lemma}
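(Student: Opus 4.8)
The plan is to reduce both identities to the single combinatorial observation that reversing a word converts follower sets in $X$ into predecessor sets in $\widehat{X}$. First I would fix notation: for a finite word $w = w_1 \cdots w_n$, write $\widehat{w} = w_n \cdots w_1$ for its reversal, so that $w \mapsto \widehat{w}$ is an involutive bijection of $A^*$ restricting to a bijection $L_n(X) \to L_n(\widehat{X})$ for every $n$; this is immediate from the definition of $\widehat{X}$.

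The key step is to show, for each $w \in L(X)$, that $F_X(w) = \{\widehat{s} : s \in P_{\widehat{X}}(\widehat{w})\}$. To see this I would simply unwind the definitions: a finite word $s$ lies in $P_{\widehat{X}}(\widehat{w})$ if and only if $s\widehat{w} \in L(\widehat{X})$, which by the definition of the reversed shift holds if and only if its reversal $w\widehat{s}$ lies in $L(X)$, i.e. if and only if $\widehat{s} \in F_X(w)$. Since reversal is a bijection on finite words, this exhibits $F_X(w)$ precisely as the image of $P_{\widehat{X}}(\widehat{w})$ under the reversal map.

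From here I would promote this to an equality of counts at each fixed length. Because reversal is injective on collections of finite words, two words $w, v \in L_n(X)$ satisfy $F_X(w) = F_X(v)$ if and only if $P_{\widehat{X}}(\widehat{w}) = P_{\widehat{X}}(\widehat{v})$. Combined with the bijection $w \mapsto \widehat{w}$ from $L_n(X)$ onto $L_n(\widehat{X})$, this yields a bijection between the set $F_X(n)$ of follower sets of length-$n$ words in $X$ and the set $P_{\widehat{X}}(n)$ of predecessor sets of length-$n$ words in $\widehat{X}$, whence $|F_X(n)| = |P_{\widehat{X}}(n)|$ for every $n$. Taking $\limsup_n \frac{1}{n}\log(\cdot)$ of both sides then gives $h_F(X) = h_P(\widehat{X})$. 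The second identity $h_P(X) = h_F(\widehat{X})$ follows by applying the first identity to $\widehat{X}$ in place of $X$ together with $\widehat{\widehat{X}} = X$, or equivalently by rerunning the same argument with the roles of followers and predecessors interchanged.

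I expect no serious obstacle here; the only thing requiring genuine care is the bookkeeping of the reversals — in particular verifying that the follower set of $w$ corresponds to the predecessor set of $\widehat{w}$ rather than of $w$ itself, and confirming that the equality of cardinalities holds at each finite length $n$ rather than merely asymptotically, so that the passage to the $\limsup$ is unambiguous.
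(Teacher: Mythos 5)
Your proof is correct and is exactly the definition-unwinding argument the paper has in mind (the paper states Lemma~\ref{switch} without proof, calling it immediate from the definitions). The key identity $F_X(w) = \{\widehat{s} : s \in P_{\widehat{X}}(\widehat{w})\}$ and the resulting equality $|F_X(n)| = |P_{\widehat{X}}(n)|$ at every finite length $n$ are verified correctly, so the passage to the $\limsup$ is sound.
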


\begin{example}
Given $\beta > 1$, let $d_\beta:[0,1) \rightarrow \{0, \ldots, \lceil \beta \rceil - 1\}^\mathbb{N}$ be the map which sends each point $x \in [0,1)$ to its expansion in base $\beta$. That is, if $\displaystyle x = \sum_{n=1}^\infty \frac{x_n}{\beta^n}$, then $d_\beta(x) = .x_1x_2x_3...$. (In the case where $x$ has more than one $\beta$-expansion, we take the lexicographically largest expansion.) The closure of the image, $\overline{d_\beta([0,1))}$, is a one-sided shift space called the \textbf{$\beta$-shift}, denoted $X_\beta$ and originally defined in \cite{renyi}.  
An equivalent characterization of the $\beta$-shift is given by the right-infinite sequence $d_\beta^* (1) := \displaystyle \lim_{x \nearrow 1} d_\beta(x)$. For any sequence $x$ on the alphabet $\{0, ..., \lfloor \beta \rfloor \}$, $x \in X_\beta$ if and only if every $\sigma^n x$ is lexicographically less than or equal to $d_\beta^* (1)$ (see \cite{parry}). 

Though $\beta$-shifts are one-sided shift spaces as defined, each $X_\beta$ has a two-sided version defined as the set of all sequences on the same alphabet where every subword is a subword of some point in the one-sided version. (This is just the so-called natural extension.) Unless otherwise stated, $\beta$-shifts in this paper refer to those two-sided versions.

\end{example}

\begin{remark}
There is yet another equivalent definition of $\beta$-shifts. If $T_\beta: [0,1) \rightarrow [0,1)$ is given by $T_\beta(x) = \beta x \pmod 1$, then the one-sided $\beta$-shift $X_\beta$ is a symbolic coding of $T_\beta$: for any $i \in \mathbb{N}$, $x \in [0,1)$, $\sigma^i(d_\beta(x)) = k$ if and only if $T_\beta^i(x) \in [\frac{k}{\beta}, \frac{k+1}{\beta})$. So, $X_\beta$ is the smallest shift space containing such codings of all $x \in [0,1)$. Though this definition has great historical importance in the study of $\beta$-shifts, we will not use it further in this work.
\end{remark}

The following results about follower/predecessor set sequences for $\beta$-shifts were proved in \cite{french2}.

\begin{theorem}\label{thomas1}
For any $\beta$-shift $X_\beta$ and any $n$, $|F_{X_{\beta}}(n)| \leq n + 1$.
\end{theorem}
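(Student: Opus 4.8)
The plan is to show that the follower set $F_{X_\beta}(w)$ of a word $w \in L_n(X_\beta)$ is completely determined by a single integer parameter that takes at most $n+1$ values: the length of the longest suffix of $w$ which is also a prefix of the defining sequence $d := d_\beta^*(1)$. Writing $\ell(w)$ for this length, we have $\ell(w) \in \{0,1,\ldots,n\}$, so once this determination is established the bound $|F_{X_\beta}(n)| \le n+1$ follows immediately by counting the possible values of $\ell(w)$.

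First I would record the combinatorial description of the language. Using the characterization that $x \in X_\beta$ iff $\sigma^k x \preceq d$ for all $k$, a finite word $v = v_1 \cdots v_m$ lies in $L(X_\beta)$ exactly when every suffix $v_k \cdots v_m$ is lexicographically $\preceq$ the prefix $(d)_{m-k+1}$. Applying this to $v = wu$ and separating the suffixes that begin inside $w$ from those that begin inside $u$, one sees that $u \in F_{X_\beta}(w)$ iff (i) $u \in L(X_\beta)$, which records the constraints coming from suffixes that begin within $u$ and does not involve $w$; and (ii) for each suffix of $w$ that equals a prefix of $d$, the word $u$ is lexicographically dominated by the appropriate shift of $d$. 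The crucial observation is that a suffix $w_k \cdots w_n$ which is \emph{strictly} smaller than $(d)_{n-k+1}$ imposes no restriction on $u$ whatsoever, since the strict inequality already holds within $w$; only those suffixes of $w$ that coincide \emph{exactly} with a prefix of $d$ constrain the continuation.

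Accordingly I would define the match set $M(w) = \{m : w_{n-m+1}\cdots w_n = (d)_m\}$, whose maximal element is $\ell(w)$. The discussion above gives $F_{X_\beta}(w) = \{u \in L(X_\beta) : u_1\cdots u_{|u|} \preceq d_{m+1}\cdots d_{m+|u|}\ \text{for all } m \in M(w)\}$, so $F_{X_\beta}(w)$ is determined by $M(w)$. The key step is then to show that $M(w)$ depends only on $\ell := \ell(w)$. Indeed, if $m \le \ell$ then, since the last $\ell$ letters of $w$ are exactly $(d)_\ell$, the last $m$ letters of $w$ equal $d_{\ell-m+1}\cdots d_\ell$; hence $m \in M(w)$ iff $d_{\ell-m+1}\cdots d_\ell = d_1 \cdots d_m$, i.e. iff $(d)_\ell$ has a border of length $m$. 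No $m > \ell$ lies in $M(w)$ by maximality. Thus $M(w)$ is precisely the set of border-lengths of $(d)_\ell$, a quantity depending only on $\ell$ and on $d$, not on the remaining letters of $w$. Consequently $F_{X_\beta}(w)$ depends only on $\ell(w)$, and the number of distinct length-$n$ follower sets is at most the number of values of $\ell(w)$, namely $n+1$.

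I expect the main obstacle to be the lexicographic bookkeeping in the previous paragraph, specifically verifying rigorously that a strict inequality within $w$ annihilates the corresponding constraint while an equality passes the constraint on to the shifted sequence, and confirming that the short matches are governed entirely by the self-overlap (border) structure of $(d)_\ell$ rather than by the discarded initial letters of $w$. Some care is also needed with the boundary case $\ell = 0$ (no suffix of $w$ begins like $d$, so the only constraint is $u \in L(X_\beta)$) and with the precise $\preceq$-versus-$\prec$ conventions in Parry's criterion, but neither affects the counting argument.
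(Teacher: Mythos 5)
Your argument is correct: reducing $F_{X_\beta}(w)$ via Parry's criterion to the match set $M(w)$ (strict lexicographic drops inside $w$ impose no condition on $u$, exact matches pass the constraint to the shifted tail of $d$), and then observing that $M(w)$ is exactly the set of border lengths of $(d)_{\ell(w)}$ and hence depends only on $\ell(w)\in\{0,\dots,n\}$, gives the bound $|F_{X_\beta}(n)|\le n+1$. The paper itself states this theorem without proof, citing \cite{french2}, and your proof is essentially the argument given there: the follower set of $w$ is determined by the length of the longest suffix of $w$ that is a prefix of $d_\beta^*(1)$.
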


\begin{theorem}\label{thomas2}
For any $\beta$-shift $X_\beta$ and any $n$, $|P_{X_{\beta}}(n)|$ is equal to the number of $n$-letter subwords of $d^*_\beta(1)$.
\end{theorem}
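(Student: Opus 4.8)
The plan is to set up a clean combinatorial description of $L(X_\beta)$ and of predecessor sets, and then to exhibit a bijection between the distinct predecessor sets of length-$n$ words and the distinct length-$n$ subwords of $d := d^*_\beta(1)$. Write $(d)_{[j+1,j+n]} = d_{j+1}\cdots d_{j+n}$ for the length-$n$ subword of $d$ beginning at position $j+1$, and for a word $s$ let $\mathrm{suf}_j(s)$ denote its length-$j$ suffix. The first step is to record the standard lexicographic characterization of the (two-sided) language: a word $v_1\cdots v_m$ lies in $L(X_\beta)$ if and only if $v_i\cdots v_m \preceq (d)_{m-i+1}$ for every $i$, which is the finite-word form of the condition stated in the excerpt together with the shift-maximality $\sigma^j d \preceq d$ of $d$. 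Applying this to a concatenation $sw$ with $|w|=n$, and splitting the suffix conditions into those beginning inside $w$ (which merely reassert $w\in L(X_\beta)$) and those beginning inside $s$, I would obtain the key reduction: for $w \in L_n(X_\beta)$,
$$s \in P_{X_\beta}(w) \iff s \in L(X_\beta) \text{ and } (d)_{[j+1,j+n]} \succeq w \text{ whenever } \mathrm{suf}_j(s) = (d)_j.$$
Thus $P_{X_\beta}(w)$ is governed entirely by the set $J(w) := \{\,j \ge 1 : (d)_{[j+1,j+n]} \succeq w\,\}$.

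Next I would package $J(w)$ into a single subword. Since every $w \in L_n(X_\beta)$ satisfies $w \preceq (d)_n$ and $(d)_n$ is the lexicographically largest length-$n$ subword of $d$ (again by shift-maximality), the collection of length-$n$ subwords that are $\succeq w$ is a nonempty up-set in the lexicographic order; let $\phi(w)$ be its minimum. Because a subword is $\succeq w$ exactly when it is $\succeq \phi(w)$, the set $J(w)$, and hence $P_{X_\beta}(w)$, depends only on $\phi(w)$. Moreover $\phi$ is surjective onto the length-$n$ subwords of $d$: each such subword $\mathbf{u}$ is itself a word of $L_n(X_\beta)$ (as $d \in X_\beta$), and $\phi(\mathbf{u}) = \mathbf{u}$.

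It remains to prove injectivity up to predecessor sets: if $\phi(w) \prec \phi(w')$ then $P_{X_\beta}(w) \ne P_{X_\beta}(w')$. Choosing a position $j_0 \ge 1$ at which $d$ displays the subword $\phi(w)$ (such a $j_0$ exists since $\phi(w) \ne (d)_n$), one has $j_0 \in J(w)$ while $j_0 \notin J(w')$, because $\phi(w) \prec \phi(w') \preceq w'$ forces $\phi(w) \prec w'$. The natural witness is the prefix $s = (d)_{j_0}$, which lies in $L(X_\beta)$ and violates the predecessor condition for $w'$ at $j = j_0$. The one genuinely delicate point, and the step I expect to be the main obstacle, is to verify that $s=(d)_{j_0}$ actually belongs to $P_{X_\beta}(w)$, since $s$ may have several indices $j < j_0$ with $\mathrm{suf}_j(s) = (d)_j$, each imposing its own constraint $(d)_{[j+1,j+n]} \succeq w$. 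This is exactly where shift-maximality is used decisively: if $\mathrm{suf}_j\bigl((d)_{j_0}\bigr) = (d)_j$, then $d$ overlaps itself so that $\sigma^{j_0} d$ and $\sigma^{j} d$ agree on their first $j$ coordinates, and the inequality $\sigma^{j_0} d \preceq d$ then upgrades to $\sigma^{j_0} d \preceq \sigma^{j} d$, yielding $(d)_{[j+1,j+n]} \succeq (d)_{[j_0+1,j_0+n]} = \phi(w) \succeq w$. Hence every such index already lies in $J(w)$, so indeed $s \in P_{X_\beta}(w) \setminus P_{X_\beta}(w')$.

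Putting these together, $w \mapsto \phi(w)$ induces a bijection between the distinct predecessor sets of length-$n$ words and the length-$n$ subwords of $d$, so that $|P_{X_\beta}(n)|$ equals the number of length-$n$ subwords of $d^*_\beta(1)$, as claimed.
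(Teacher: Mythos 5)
The paper itself does not prove this theorem; it is quoted from \cite{french2} without proof, so there is no internal argument to compare against. Your proof is essentially correct and complete: the reduction of the predecessor condition to the set $J(w)$ via the suffix characterization of $L(X_\beta)$ is right, the passage to the minimal dominating subword $\phi(w)$ is the correct invariant, and the witness $s=(d)_{j_0}$ together with the shift-maximality argument does establish that distinct values of $\phi$ yield distinct predecessor sets, while surjectivity of $\phi$ and the fact that $P_{X_\beta}(w)$ depends only on $\phi(w)$ give the matching upper bound. Two justifications are misstated, though both conclusions are salvageable. First, you write $\phi(w')\preceq w'$; by definition $\phi(w')\succeq w'$, and the correct reason that $\phi(w)\prec w'$ is minimality: $\phi(w)$ is a subword of $d$ lying strictly below $\phi(w')=\min\{u \text{ a subword of } d : u\succeq w'\}$, hence cannot itself satisfy $u\succeq w'$. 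Second, in the step you rightly flag as delicate, it is not $\sigma^{j_0}d$ and $\sigma^{j}d$ that agree on their first $j$ coordinates, but $\sigma^{j_0-j}d$ and $d$ (this is exactly what $\mathrm{suf}_j\bigl((d)_{j_0}\bigr)=(d)_j$ says); shift-maximality gives $\sigma^{j_0-j}d\preceq d$, and since these two sequences agree on their first $j$ letters the inequality survives deleting them, yielding $\sigma^{j_0}d\preceq\sigma^{j}d$ and hence $(d)_{[j+1,j+n]}\succeq (d)_{[j_0+1,j_0+n]}=\phi(w)\succeq w$, which is what you need. With these two local corrections the argument is a valid proof of the theorem.
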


The latter is particularly useful when combined with the following result of Schmeling.

\begin{theorem}[\cite{schmeling}, Theorems B and E]
For all $\beta$ outside a meager set of Lebesgue measure $0$, all words in $L(X_\beta)$ appear as subwords of $d^*_\beta(1)$.
\end{theorem}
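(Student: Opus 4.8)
The plan is to recast the statement as a density-of-orbit result and then prove the two genericity conclusions separately. First I would note that, since $d^*_\beta(1)\in X_\beta$ and $X_\beta$ is closed, every subword of $d^*_\beta(1)$ already lies in $L(X_\beta)$, so the content of the theorem is the reverse inclusion. A word $w$ appears as a subword of $d^*_\beta(1)$ exactly when $\sigma^k d^*_\beta(1)$ has $w$ as a prefix for some $k\geq 0$, so the assertion that every word of $L(X_\beta)$ appears in $d^*_\beta(1)$ is precisely the statement that the forward orbit $\{\sigma^k d^*_\beta(1):k\geq 0\}$ is dense in $X_\beta$. Thus I would reduce the theorem to showing that, for all $\beta$ outside a meager null set, the distinguished point $d^*_\beta(1)$ is a transitive point of $(X_\beta,\sigma)$.

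Second, I would set up the standard parametrization. By Parry's characterization (see \cite{parry}) the greedy expansion map $\beta\mapsto d_\beta(1)$ is a strictly increasing bijection from $(1,\infty)$ onto the set of self-admissible sequences, namely those $a$ with $\sigma^k a\preceq a$ for all $k\geq 1$, and it is continuous off the countable set of simple Parry numbers. This lets me transport statements about Lebesgue measure and Baire category from the $\beta$-line to the associated sequence space, and it drives both remaining steps.

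For the meager half I would show the good set is comeager. For each admissible word $w$, let $U_w$ denote the set of $\beta$ for which $w$ appears as a subword of $d^*_\beta(1)$. Appearance of $w$ is witnessed by a finite prefix of $d^*_\beta(1)$, so by the (piecewise) continuity above, $U_w$ is open away from a countable set. To see $U_w$ is dense, given any $\beta_0$ I would build a nearby self-admissible sequence that copies a long prefix of $d_{\beta_0}(1)$, inserts a copy of $w$, and then resumes a shift of $d_{\beta_0}(1)$ to restore self-admissibility; the corresponding $\beta$ lies as close to $\beta_0$ as we wish and has $w$ in its expansion of $1$. Then the good set contains $\bigcap_w U_w$, a countable intersection of open dense sets, hence is comeager.

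For the measure-zero half, which I expect to be the main obstacle, the aim is to prove that for Lebesgue-a.e.\ $\beta$ the sequence $d^*_\beta(1)$ is normal for the measure of maximal entropy (the Parry measure) on $X_\beta$, since a normal sequence visits every cylinder and so has dense orbit. The genuine difficulty is that the problem is \emph{diagonal}: the tracked word, the invariant measure used to count its frequency, and the sequence in which we search for it all vary with $\beta$ at once, so no single ergodic theorem applies directly. My approach would be to push Lebesgue measure forward under $\beta\mapsto d_\beta(1)$, compare it with the natural shift-invariant family of measures, and for each fixed admissible word obtain a quantitative bound on the measure of the set of $\beta$ whose expansion of $1$ omits that word, small enough that a Borel--Cantelli argument over a countable list of words produces a null exceptional set. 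Securing this single-word estimate uniformly across the varying alphabets and admissibility constraints is the crux, and is exactly where Schmeling's thermodynamic and transfer-operator estimates carry the load; intersecting the resulting null set with the meager set from the category argument then yields the theorem.
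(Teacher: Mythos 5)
The first thing to note is that the paper does not prove this statement at all: it is quoted as Theorems B and E of \cite{schmeling}, so there is no internal proof to compare yours against. Judged on its own terms, your opening reduction is correct and is the right way to read the theorem: since $x \in X_\beta$ iff $\sigma^n x \preceq d^*_\beta(1)$ for all $n$, the point $d^*_\beta(1)$ lies in $X_\beta$, and ``every word of $L(X_\beta)$ occurs in $d^*_\beta(1)$'' is exactly the assertion that $d^*_\beta(1)$ is a transitive point of $(X_\beta,\sigma)$. Your Baire-category half is essentially the standard genericity argument, though two details need care: the relevant sets are not quite your $U_w$, since for a fixed word $w$ you must also keep the $\beta$ with $w \notin L(X_\beta)$ (the theorem only asks for words \emph{of the language} to occur, and which words are admissible varies with $\beta$); and the insertion step must be checked to preserve self-admissibility at the finitely many shifts starting inside the copied prefix, which requires taking that prefix long enough that all the strict inequalities $\sigma^k d^*_{\beta_0}(1) \prec d^*_{\beta_0}(1)$ for those $k$ are witnessed before the insertion point.

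The genuine gap is the full-measure half. You correctly identify that the difficulty is diagonal --- the tracked word, the reference measure, and the orbit all vary with $\beta$ simultaneously --- but you then state that the needed single-word estimate is ``exactly where Schmeling's thermodynamic and transfer-operator estimates carry the load.'' That estimate \emph{is} the theorem: producing, for each word $w$, a bound on the Lebesgue measure of $\{\beta \ : \ w \in L(X_\beta) \textrm{ but } w \textrm{ does not occur in } d^*_\beta(1)\}$ sharp enough that the union over a countable list of words is null is the entire content of Schmeling's Theorem B, and no mechanism for obtaining it is supplied. Note also that normality with respect to the Parry measure is considerably stronger than what is needed (a dense orbit suffices), so aiming at normality makes your target harder than necessary. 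As written, the proposal is an accurate description of how the proof in \cite{schmeling} is organized rather than a proof of the statement; if the crux is to be deferred to Schmeling in any case, one may as well cite the theorem outright, which is what the paper does.
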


Since $h(X_\beta) = \log \beta$ for all $\beta$, the following corollary is immediate and will be useful in several later arguments.

\begin{corollary}\label{betacor}
For all $\beta$, $h_F(X_\beta) = 0$. For all $\beta$ outside a meager set of Lebesgue measure $0$, $h_E(X_\beta) = h_P(X_\beta) = h(X_\beta) = \log \beta$.

\end{corollary}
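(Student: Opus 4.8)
The plan is to derive Corollary~\ref{betacor} directly from the three results immediately preceding it: Theorem~\ref{thomas1}, Theorem~\ref{thomas2}, and Schmeling's theorem, together with the standing facts that $h(X_\beta) = \log\beta$ and that extender entropy dominates both follower and predecessor entropy (inequality~(\ref{count}) and its entropic consequence stated just after Remark~\ref{proj}).

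First I would handle the universal claim $h_F(X_\beta) = 0$. By Theorem~\ref{thomas1}, $|F_{X_\beta}(n)| \leq n+1$ for every $n$, so
\begin{equation*}
h_F(X_\beta) = \limsup_{n \to \infty} \frac{1}{n}\log |F_{X_\beta}(n)| \leq \limsup_{n \to \infty} \frac{\log(n+1)}{n} = 0.
\end{equation*}
Since the quantity is manifestly nonnegative, this forces $h_F(X_\beta) = 0$ for \emph{all} $\beta$, with no genericity hypothesis needed.

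Next I would treat the generic claims. Fix $\beta$ outside Schmeling's meager, measure-zero exceptional set, so that every word in $L(X_\beta)$ occurs as a subword of $d^*_\beta(1)$. Then the number of $n$-letter subwords of $d^*_\beta(1)$ equals $|L_n(X_\beta)|$, and by Theorem~\ref{thomas2} this also equals $|P_{X_\beta}(n)|$. Taking $\frac{1}{n}\log(\cdot)$ and passing to the limit, this identity of counting sequences yields $h_P(X_\beta) = h(X_\beta) = \log\beta$. To finish, I would sandwich the extender entropy: on one hand $h_E(X_\beta) \geq h_P(X_\beta) = \log\beta$ by the inequality following Remark~\ref{proj}; on the other hand, the extender set of a word is determined by its follower and predecessor data, so $|E_X(n)| \leq |L_n(X)|$ always holds (distinct extender sets require distinct words, and in fact the trivial bound $|E_X(n)| \leq |L_{2n}(X)|$ or the cleaner observation that each extender set is an equivalence class of length-$n$ words suffices to give $h_E(X_\beta) \leq h(X_\beta) = \log\beta$). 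Combining the two inequalities gives $h_E(X_\beta) = \log\beta$, and chaining the equalities produces $h_E(X_\beta) = h_P(X_\beta) = h(X_\beta) = \log\beta$.

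The only genuine subtlety I anticipate is justifying the upper bound $h_E(X_\beta) \leq h(X_\beta)$ cleanly, since the excerpt emphasizes only that $h_E \geq h_P$ and $h_E \geq h_F$, not an a priori upper bound by topological entropy. The resolution is that the map sending a word of length $n$ to its extender set is constant on each extender-equivalence class, so $|E_X(n)|$ is at most the number of length-$n$ words, giving $\frac{1}{n}\log|E_X(n)| \leq \frac{1}{n}\log|L_n(X)|$ for every $n$; the limit inequality $h_E(X) \leq h(X)$ then holds for every shift space and in particular for $X_\beta$. With that bound in hand the corollary is immediate, and I would present it as a short chain of (in)equalities rather than belaboring any single estimate.
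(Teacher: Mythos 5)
Your proof is correct and matches the paper's intended argument exactly: the paper offers no written proof (it calls the corollary ``immediate'' after citing Theorem~\ref{thomas1}, Theorem~\ref{thomas2}, Schmeling's theorem, and $h(X_\beta)=\log\beta$), and your chain --- $|F_{X_\beta}(n)|\leq n+1$ forces $h_F=0$; genericity makes $|P_{X_\beta}(n)|=|L_n(X_\beta)|$ so $h_P=h=\log\beta$; and $h_P\leq h_E\leq h$ squeezes $h_E$ --- is precisely the route being left to the reader. One tiny caution: your parenthetical that an extender set is ``determined by its follower and predecessor data'' is false in general and unneeded; the surjectivity of $w\mapsto E_X(w)$ from $L_n(X)$ onto $E_X(n)$, which you also give, is the correct justification for $|E_X(n)|\leq|L_n(X)|$.
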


Finally, we briefly recall some definitions from \cite{buzzi} which are somewhat similar to ours.

\begin{definition}
Given a shift space $X$, a word $w \in L(X)$ is a \textbf{left constraint} if we can write $w = av$ for $a \in A$ and $F_X(av) \neq F_X(v)$. 
\end{definition}

\begin{definition}
Given a shift space $X$, the \textbf{left constraint entropy} is 
\[
h_C(X) = \limsup_{n \rightarrow \infty} \frac{\log |\{w \in L_n(X) \ : \ w \textrm{ is a left constraint}\}|}{n}.
\]
\end{definition}

Buzzi also defined right constraints and right constraint entropy, and proved many useful structural properties on $X$ under the hypothesis that either constraint entropy is strictly smaller than the topological entropy; such shift spaces were called \textbf{subshifts of quasi-finite type} in \cite{buzzi}. Any $\beta$-shift, for instance, is a subshift of quasi-finite type.

More directly relevant to our definitions is the fact that left constraint entropy is a conjugacy invariant which may increase under factor maps, just like extender entropy. One interesting observation is that the definition of left constraint entropy involves only follower sets, and our follower entropy is not a conjugacy invariant. This can be explained by another difference; in the definition of left constraint entropy, a single follower set corresponding to multiple words is counted multiple times, whereas in our definitions it would be counted only once. 


\section{The behavior of extender and follower entropy under products, conjugacies, and factors}
\label{ExtEnt}

\begin{theorem}\label{extentexists}
The limit in the definition of extender entropy exists for every shift space $X$.
\end{theorem}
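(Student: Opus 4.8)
The plan is to establish that the sequence $|E_X(n)|$ is submultiplicative, i.e. $|E_X(m+n)| \leq |E_X(m)| \cdot |E_X(n)|$ for all $m, n$, and then invoke Fekete's subadditive lemma applied to the sequence $a_n := \log |E_X(n)|$. Fekete's lemma guarantees that if $a_{m+n} \leq a_m + a_n$, then $\lim_{n \to \infty} a_n / n$ exists and equals $\inf_n a_n/n$. Since $|E_X(n)| \geq 1$, the infimum is bounded below by $0$, so the limit exists (as a finite nonnegative number). Thus the entire problem reduces to proving the submultiplicative bound.

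To prove submultiplicativity, I would argue that the extender set of a word $w$ of length $m+n$ is determined by the extender sets of two carefully chosen subwords. Write $w = w' w''$ where $w'$ is the prefix of length $m$ and $w''$ is the suffix of length $n$. The key claim is the implication: if two words $w = w'w''$ and $v = v'v''$ (both of length $m+n$) satisfy $E_X(w') = E_X(v')$ and $E_X(w'') = E_X(v'')$, then $E_X(w) = E_X(v)$. Granting this claim, the map sending the extender set $E_X(w)$ to the pair $(E_X(w'), E_X(w''))$ is well-defined and injective, giving $|E_X(m+n)| \leq |E_X(m)| \cdot |E_X(n)|$ immediately.

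The heart of the matter, and the step I expect to be the main obstacle, is verifying this claim about extender sets of concatenations. The intuition is that whether $s\,w\,u = s\,w'\,w''\,u$ appears in a point of $X$ can be tested locally: the pair $(s, w''u)$ must be a legal extender context for $w'$, and simultaneously $(sw', u)$ must be a legal extender context for $w''$. More precisely, I would show $(s,u) \in E_X(w'w'')$ if and only if there is compatibility expressible through the extender sets of $w'$ and $w''$ separately. The subtlety is that the "connecting" information between $w'$ and $w''$ must be fully captured by their extender sets, and one must check that replacing $w'$ by $v'$ (with the same extender set) and $w''$ by $v''$ (with the same extender set) does not create or destroy any admissible context $(s,u)$. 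The cleanest route is probably to argue directly: given $(s,u) \in E_X(w'w'')$, the word $sw'w''u$ is legal, so $(s, w''u) \in E_X(w')$; since $E_X(w') = E_X(v')$, we get $sv'w''u$ legal, hence $(sv', u) \in E_X(w'')$; using $E_X(w'') = E_X(v'')$ then yields $sv'v''u$ legal, i.e. $(s,u) \in E_X(v'v'')$. Symmetry gives the reverse inclusion, completing the claim. I would want to double-check that each intermediate word remains in $L(X)$ so that the extender-set memberships are well-defined, which is the kind of routine bookkeeping that nonetheless must be done carefully.
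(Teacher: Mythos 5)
Your proposal is correct and follows essentially the same route as the paper: submultiplicativity of $|E_X(n)|$ via the claim that $E_X(w')=E_X(v')$ and $E_X(w'')=E_X(v'')$ imply $E_X(w'w'')=E_X(v'v'')$ (proved by the identical two-step replacement $sw'w''u \rightsquigarrow sv'w''u \rightsquigarrow sv'v''u$), followed by Fekete's lemma. The only quibble is phrasing: the map $E_X(w)\mapsto(E_X(w'),E_X(w''))$ is not literally well-defined on extender sets (words with equal extender sets may have prefixes with different ones), so one should either fix an arbitrary representative word for each extender set as the paper does and use your claim to prove injectivity, or equivalently note that your claim makes the reverse map $(E_X(w'),E_X(w''))\mapsto E_X(w'w'')$ a well-defined surjection onto $E_X(m+n)$.
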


\begin{proof}

We claim that for every $m,n \in \mathbb{N}$, $|E_X(n+m)| \leq |E_X(n)| \cdot |E_X(m)|$. Once this is verified, the existence of the limit follows from usual submultiplicativity arguments (i.e. the application of Fekete's Lemma.) 

To this end, we will define an injection $g: E_X(n+m) \rightarrow E_X(n) \times E_X(m)$. The definition of $g$ is as follows: for each extender set $E$ in $E_X(n+m)$, arbitrarily choose a word $w \in L_{n+m}(X)$ for which $E = E_X(w)$, then write $w = uv$ for $u \in L_n(X)$ and $v \in L_m(X)$, and define $g(E) = (E_X(u), E_X(v))$. 

It's clear that $g$ has the claimed domain and co-domain. We must only check that it is injective. To see this, we assume that $g(E) = g(E') = (F_1, F_2)$ for $E,E' \in E_{X}(n+m)$. Then $E = E_X(u_1 u_2)$ and $E' = E_X(u'_1 u'_2)$ where $E_X(u_1) = E_X(u'_1) = F_1$ and $E_X(u_2) = E_X(u'_2) = F_2$. Then, for any $(\ell, r) \in E$, $\ell u_1 u_2 r \in X$. Since $E_X(u_1) = E_X(u'_1)$, $\ell u'_1 u_2 r \in X$. Since $E_X(u_2) = E_X(u'_2)$, $\ell u'_1 u'_2 r \in X$, and so $(\ell, r) \in E_X(u'_1 u'_2)$. Since $(\ell, r)$ was arbitrary, $E = E_X(u_1 u_2) \subset E_X(u'_1 u'_2) = E'$. A trivially similar proof shows that $E' \subset E$, so $E = E'$, and we've verified injectivity of $g$. This implies the claimed inequality $|E_{X}(n+m)| \leq |E_X(n)| \cdot |E_X(m)|$, and so the existence of extender entropy as outlined above.

\end{proof}

\begin{theorem}\label{extprod}
Extender entropy is additive under products, i.e. if $X_1, X_2$ are shift spaces, then $h_E(X_1 \times X_2) = h_E(X_1) + h_E(X_2)$.
\end{theorem}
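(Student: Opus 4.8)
The plan is to reduce the statement to the exact counting identity $|E_{X_1 \times X_2}(n)| = |E_{X_1}(n)| \cdot |E_{X_2}(n)|$ for every $n$, after which the result is immediate: one divides by $n$, takes logarithms, and lets $n \to \infty$, with all three limits guaranteed to exist by Theorem~\ref{extentexists}. All of the content therefore lies in establishing this product formula at the level of the number of distinct extender sets.

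First I would unwind the definition of the extender set of a word in the product. Writing a length-$n$ word of $X_1 \times X_2$ as a pair $(w_1, w_2)$ with $w_1 \in L_n(X_1)$ and $w_2 \in L_n(X_2)$, a pair $((s_1,s_2),(u_1,u_2))$ lies in $E_{X_1 \times X_2}((w_1,w_2))$ precisely when $s_1 w_1 u_1 \in L(X_1)$ and $s_2 w_2 u_2 \in L(X_2)$, that is, when $(s_1,u_1) \in E_{X_1}(w_1)$ and $(s_2,u_2) \in E_{X_2}(w_2)$. Thus, under the reindexing $((s_1,s_2),(u_1,u_2)) \leftrightarrow ((s_1,u_1),(s_2,u_2))$, the set $E_{X_1 \times X_2}((w_1,w_2))$ is identified with the Cartesian product $E_{X_1}(w_1) \times E_{X_2}(w_2)$.

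The key step is to show that the assignment sending the ordered pair $(E_{X_1}(w_1), E_{X_2}(w_2))$ to $E_{X_1 \times X_2}((w_1,w_2))$ is a well-defined bijection from $E_{X_1}(n) \times E_{X_2}(n)$ onto $E_{X_1 \times X_2}(n)$. Surjectivity is easy: every word of $L_n(X_1 \times X_2)$ is such a pair $(w_1,w_2)$, and conversely any pair with $w_1 \in L_n(X_1)$ and $w_2 \in L_n(X_2)$ is itself a legal word of the product, so every pair of component extender sets is realized. The delicate direction, and the main obstacle, is injectivity, i.e. recovering the factor extender sets from the product: if $E_{X_1}(w_1) \times E_{X_2}(w_2) = E_{X_1}(w_1') \times E_{X_2}(w_2')$, then $E_{X_1}(w_1) = E_{X_1}(w_1')$ and $E_{X_2}(w_2) = E_{X_2}(w_2')$. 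This is exactly the fact that a Cartesian product of two nonempty sets determines each of its factors, and it applies here because each $E_{X_i}(w_i)$ is nonempty, any word in the language of a shift space admitting at least one legal pair of extensions. I would state this nonemptiness explicitly, since it is precisely what prevents the product formula from degenerating.

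Having established $|E_{X_1 \times X_2}(n)| = |E_{X_1}(n)| \cdot |E_{X_2}(n)|$, I would finish by writing $\frac{1}{n}\log|E_{X_1 \times X_2}(n)| = \frac{1}{n}\log|E_{X_1}(n)| + \frac{1}{n}\log|E_{X_2}(n)|$ and passing to the limit, invoking Theorem~\ref{extentexists} so that each summand converges; additivity of limits then yields $h_E(X_1 \times X_2) = h_E(X_1) + h_E(X_2)$.
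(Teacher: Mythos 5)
Your proposal is correct and follows essentially the same route as the paper: identify $E_{X_1\times X_2}((w_1,w_2))$ with the product of the factor extender sets, deduce the exact count $|E_{X_1\times X_2}(n)| = |E_{X_1}(n)|\cdot|E_{X_2}(n)|$, and take logarithms. Your explicit remark that nonemptiness of each $E_{X_i}(w_i)$ is what makes the product determine its factors is a point the paper leaves implicit, but otherwise the arguments coincide.
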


\begin{proof}
Any word in $L_n(X_1 \times X_2)$ can be written as a pair $(w_1, w_2)$, where $w_i$ is given by the $i$th coordinate of letters in $w$. It is immediate from the definition of Cartesian product and extender set that $E_{X_1 \times X_2}(w_1, w_2) = \{((s, s'), (u, u')) \ : \ (s, u) \in E_{X_1}(w_1), (s', u') \in E_{X_2}(w_2)\}$. Then, $(w_1, w_2)$ and $(w'_1, w'_2)$ have the same extender set in $X_1 \times X_2$ iff $w_1$ and $w'_1$ have the same extender set in $X_1$ and $w_2$ and $w'_2$ have the same extender set in $X_2$. Therefore
\[
|E_{X_1 \times X_2}(n)| = |E_{X_1}(n)| |E_{X_2}(n)|,
\]
and taking logarithms, dividing by $n$, and letting $n \rightarrow \infty$ yields $h_E(X_1 \times X_2) = h_E(X_1) + h_E(X_2)$.

\end{proof}

\begin{theorem}\label{extconjinv}
Extender entropy is a conjugacy invariant, i.e. if $X$ and $Y$ are conjugate shift spaces, then $h_E(X) = h_E(Y)$.
\end{theorem}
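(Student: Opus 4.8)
The plan is to show that the integer sequences $|E_X(n)|$ and $|E_Y(n)|$ have the same exponential growth rate; since the defining limit exists by Theorem~\ref{extentexists}, it suffices to control each sequence by a bounded multiple of the other evaluated at a nearby length. Because conjugacy is symmetric (if $\varphi\colon X\to Y$ is a conjugacy then so is $\varphi^{-1}\colon Y\to X$), I will only prove $h_E(Y)\le h_E(X)$ and then quote the identical argument with the roles of $X$ and $Y$ reversed. By the Curtis--Hedlund--Lyndon theorem, both $\varphi$ and $\psi:=\varphi^{-1}$ are sliding block codes; after enlarging one of the radii, I take them to have a common radius $R$, so that $(\varphi x)(i)$ depends only on $x(i-R)\cdots x(i+R)$ and likewise for $\psi$.

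The engine of the argument is to transport extender sets across $\varphi$. For $v\in L_n(Y)$ and any point $y\in Y$ containing $v$ at coordinates $[0,n-1]$, the word $u(v):=\psi(y)[R,\,n-1-R]\in L_{n-2R}(X)$ is well defined (independent of the choice of $y$) precisely because $\psi$ has radius $R$. Given a completion $(\ell,\rho)\in E_Y(v)$, forming the bi-infinite point $\ell v\rho\in Y$ and applying $\psi$ reads off an adjacent $X$-context of $u(v)$, producing a member of $E_X(u(v))$, and this passage is reversible once $u(v)$ is held fixed. To move between distinct words carrying the same extender data I would invoke the transplant property of extender sets used in the proof of Theorem~\ref{extentexists}: replacing a central block by another block with the same extender set leaves the surrounding bi-infinite point inside the shift. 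Thus, fixing an $X$-context, swapping $u(v)$ for a block of equal extender set, and then applying $\varphi$ carries one completion of $v$ to a completion of a word in $Y$ that agrees with the intended target throughout its interior.

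The main obstacle, and the place where essentially all the work lives, is the boundary. Since $\varphi$ and $\psi$ each read a window of radius $R$, the image of a point near the two ends of the coordinate block occupied by $v$ depends not only on $u(v)$ but also on the exterior context, and symmetrically $\psi$ reaches $R$ coordinates beyond the ends of $v$. As a consequence extender-set classes do \emph{not} correspond bijectively across $\varphi$: after transplanting a central block and pushing forward, the resulting word in $Y$ may disagree with the target word in the $O(R)$ coordinates at each end, and that disagreement is dictated by the ambient context rather than by $v$. The crux is therefore to confine this contamination: despite the failure of an exact class correspondence, one shows that the number of new extender-set classes created in $Y$ by these end effects is controlled by the finitely many possible local behaviors of the codes on the $O(R)$ end-coordinates, yielding an estimate of the form
\[
|E_Y(n)|\ \le\ K\,\bigl|E_X(n+2R)\bigr|,
\]
where $K$ depends only on $R$ and the alphabets, not on $n$.

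Taking $\tfrac1n\log$ of this estimate and letting $n\to\infty$ makes both the constant $K$ and the bounded length shift $2R$ disappear, so that $h_E(Y)\le h_E(X)$; applying the same reasoning to $\psi$ gives $h_E(X)\le h_E(Y)$, and hence $h_E(X)=h_E(Y)$. I expect the only genuinely delicate point to be the boundary-confinement estimate above. The clean model case is the passage to a higher block presentation, where $\varphi$ is a mere recoding and one has the exact identity $|E_{X^{[N]}}(n)|=|E_X(n+N-1)|$ with no boundary loss at all; the content of the general case is precisely that identity decorated with the bounded end-corrections forced by a nontrivial inverse radius, and verifying that these corrections are sub-exponential in $n$ is where the care is required.
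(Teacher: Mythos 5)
Your overall strategy is the same as the paper's: transport extender classes across the conjugacy at the cost of a bounded length change and a bounded multiplicative constant coming from boundary effects, then let $n \to \infty$ so that both disappear. (The paper organizes this by factoring $\varphi$ through the higher-block presentation $X^{[-r,r]}$ and treating the recoding and the resulting $1$-block map separately, but that is a presentational choice rather than a different idea.) The problem is that your write-up stops exactly at the step that constitutes the proof. The inequality $|E_Y(n)| \le K\,|E_X(n\pm 2R)|$ is asserted, and the mechanism is described only as ``controlled by the finitely many possible local behaviors of the codes on the $O(R)$ end-coordinates,'' but you never say what the finite boundary invariant actually is, nor verify that it, together with $E_X(u(v))$, determines $E_Y(v)$. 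That verification is precisely the content of the paper's Facts 1 and 2, so as written the proposal is a plan rather than a proof.

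To close the gap you need a statement of the form: if $v, v' \in L_n(Y)$ satisfy $E_X(u(v)) = E_X(u(v'))$ \emph{and} $v$ and $v'$ agree on their first and last $cR$ letters for a suitable absolute constant $c$, then $E_Y(v) = E_Y(v')$; the boundary invariant is thus the pair of end-words of $v$ itself, contributing the factor $K = |B|^{2cR}$ with $B$ the alphabet of $Y$. The proof of this lemma is the transplant argument you gesture at: given $\ell v \rho \in Y$, set $x = \psi(\ell v \rho)$, which carries $u(v)$ on coordinates $[R, n-1-R]$; replace $u(v)$ by $u(v')$ there (legal since the extender sets agree, in the infinite-context formulation), and apply $\varphi$ to the resulting point $x'$. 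Outside $[0,n-1]$ and on the interior $[2R, n-1-2R]$ one checks directly that $\varphi(x')$ reproduces $\ell$, $\rho$, and $v'$; on the remaining $O(R)$ coordinates at each end, the hypothesis that $v$ and $v'$ share their end-words forces $u(v)$ and $u(v')$ to share slightly shorter end-words, hence $x'$ and $x$ to agree on a radius-$R$ neighborhood of those coordinates, hence $\varphi(x')$ to agree with $v'$ there too. This yields $|E_Y(n)| \le |E_X(n-2R)|\cdot |B|^{2cR}$; note the length must be $n-2R$, not $n+2R$, since your $u(v)$ has length $n-2R$ and no longer $X$-word is well-defined from $v$ alone --- a harmless slip for the limit, but a sign that the bookkeeping at the boundary, which you correctly identify as the crux, was not actually carried out. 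With this lemma supplied, the argument is correct and essentially identical to the paper's.
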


\begin{proof}
Suppose that $X$ and $Y$ are conjugate shift spaces, via a conjugacy $\phi: X \rightarrow Y$. Since $\phi$ is a sliding block code, it has a radius $r$, meaning that $x(-r) \ldots x(r)$ uniquely determines $(\phi(x))(0)$. We can then decompose $\phi = \psi \circ f^{[-r,r]}$, where 
$f^{[-r,r]}: X \rightarrow X^{[-r,r]}$ is the canonical conjugacy from $X$ to its higher-block presentation $X^{[-r,r]}$ and $\psi: X^{[-r,r]} \rightarrow Y$ is a $1$-block conjugacy. We will prove Theorem~\ref{extconjinv} by showing first that $h_E(X) = h_E(X^{[-r,r]})$ and then that $h_E(X^{[-r,r]}) \geq h_E(Y)$, implying that $h_E(X) \geq h_E(Y)$; reversing the roles of $X$ and $Y$ then shows that 
$h_E(X) = h_E(Y)$. For notational convenience, we from now on use $Z$ to denote $X^{[-r,r]}$ and $f$ to denote $f^{[-r,r]}$. We need the following auxiliary fact:\\

\noindent
\textbf{Fact 1:} For $n \geq 2r$ and $w, w' \in L_n(X)$, $E_Z(f(w)) = E_Z(f(w'))$ if and only if $E_X(w) = E_X(w')$ and $w(i) = w'(i)$ for all $1 \leq i \leq 2r$ and $n - 2r < i \leq n$.\\

$\Longrightarrow$: Assume that $E_Z(f(w)) = E_Z(f(w'))$. Note that by the rules defining 
$Z = X^{[-r,r]}$, any letter $a \in A^{[-r,r]}$ (the alphabet of $Z$) for which $f(w) a \in L(Z)$ must have first $2r$ letters (in $A$) equal to the final $2r$ letters (in $A$) of $w$. Therefore, it must be the case that $w$ and $w'$ agree on their final $2r$ letters, and a trivially similar argument shows the same about the first $2r$ letters. 

Now, choose any $(u,v) \in E_X(w)$. Then $uwv \in X$, and so clearly $f(uwv) \in Z$. We can then write $f(uwv) = f(u) s f(w) t f(v)$, where $s$ is determined by the final $2r$ letters of $u$ and initial $2r$ letters of $w$, and $t$ is determined by the final $2r$ letters of $w$ and initial $2r$ letters of $v$. Since $w,w'$ agree on their first and last $2r$ letters, $f(uw'v) = f(u) s f(w') t f(v)$. But then since $E_Z(f(w)) = E_Z(f(w'))$, and $f(uwv) = f(u) s f(w) t f(v) \in Z$, it must be the case that $f(uw'v) = f(u) s f(w') t f(v) \in Z$. But then since $f$ is invertible, $uw'v \in X$ and so $(u,v) \in E_X(w')$. Since $(u,v)$ was an arbitrary element of $E_X(w)$, we've shown that $E_X(w) \subseteq E_X(w')$, and a trivially similar argument shows the reverse, so $E_X(w) = E_X(w')$.\\

$\Longleftarrow$: Assume that $E_X(w) = E_X(w')$ and that $w(i) = w'(i)$ for all $1 \leq i \leq 2r$ and $n - 2r < i \leq n$. Choose any $(u,v) \in E_Z(f(w))$, meaning that $uf(w)v \in Z$. Then clearly $f^{-1}(u f(w) v) \in X$, and can be written as $\overline{u} w \overline{v}$. Similar arguments to those used in the reverse direction show that since $w,w'$ share the same first and last $2r$ letters, $f^{-1}(u f(w') v) = \overline{u} w' \overline{v}$. Since $E_X(w) = E_X(w')$ and $f^{-1}(u f(w) v) = \overline{u} w \overline{v} \in X$, it must be the case that $f^{-1}(u f(w') v) = \overline{u} w' \overline{v} \in X$ as well. But then $u f(w') v \in Z$. Since $u,v$ were arbitrary, we've shown that $E_Z(f(w)) \subseteq E_Z(f(w'))$, and a trivially similar argument shows the reverse, so $E_Z(f(w)) = E_Z(f(w'))$, completing the proof of Fact 1.\\

Fact 1 implies that for every $n > 4r$, $|E_X(n)| \leq |E_Z(n)| \leq |E_X(n)| \cdot |A|^{4r}$. Upon taking logarithms, dividing by $n$, and letting $n \rightarrow \infty$, we see that $h_E(X) = h_E(Z)$. It remains to prove that $h_E(Z) \geq h_E(Y)$, for which we will use only the fact that $\psi: Z \rightarrow Y$ is a $1$-block conjugacy. Since $\psi^{-1}$ is a sliding block code, it has a radius $s$, meaning that $(\psi(x))(-s) \ldots (\psi(x))(s)$ uniquely determines $x(0)$. The final step in our proof is the following auxiliary fact:\\

\noindent
\textbf{Fact 2:} If $E_Z(w) = E_Z(w')$ and $u, v \in A^s$ are such that $uwv \in L(Z)$ (meaning that $uw'v \in L(Z)$ also), then $E_Y(\psi(w)) = E_Y(\psi(w'))$.\\

To see this, suppose that $E_Z(w) = E_Z(w')$ and $u, v \in A^s$ are such that $uwv, uw'v \in L(Z)$, and recall that $\psi$ is a $1$-block map. Our key observation is that

\begin{equation}\label{eq1}
E_Y(\psi(uwv)) = \bigcup_{x \in \psi^{-1}(\psi(uwv))} \psi(E_Z(x)) = \bigcup_{y \in \psi^{-1}(u), z \in \psi^{-1}(v) \textrm{ s.t. }  ywz \in L(Z)} \psi(E_Z(ywz)).
\end{equation}
Here, the second equality uses the fact that any $x \in \psi^{-1}(uwv)$ must have a $w$ at its center since $s$ is the radius of $\psi^{-1}$. 

We now note that since $E_Z(w) = E_Z(w')$, the sets $\{y \in \psi^{-1}(u), z \in \psi^{-1}(v) \textrm{ s.t. } \newline ywz \in L(Z)\}$ and $\{y \in \psi^{-1}(u), z \in \psi^{-1}(v) \textrm{ s.t. } yw'z \in L(Z)\}$ are the same. For the same reason, given any pair $y,z$ in this set, 
$E_Z(ywz) = E_Z(yw'z)$. Combining with (\ref{eq1}), we see that indeed $E_Y(\psi(uwv)) = E_Y(\psi(uw'v))$, completing the proof.\\

By Fact 2, for every $n > 2s$, $|E_Y(n)| \leq |E_{Z}(n-2s)| \cdot |A|^{2s}$. Upon taking logarithms, dividing by 
$n$, and letting $n \rightarrow \infty$, we see that $h_E(Z) \geq h_E(Y)$. Combining with the fact that $h_E(X) = h_E(Z)$, we see that
$h_E(X) \geq h_E(Y)$, and since we could repeat this proof with the roles of $X$ and $Y$ reversed, we see that indeed $h_E(X) = h_E(Y)$, completing the proof of Theorem~\ref{extconjinv}. 

\end{proof}

Unfortunately, we will show that follower entropy is not conjugacy-invariant. However, the following result bounds the amount by which it can increase under a conjugacy.

\begin{theorem}\label{follconjbound}
If $\phi: X \rightarrow Y$ is a conjugacy, $r$ is the radius of $\phi$, and $s$ is the radius of $\phi^{-1}$, then $h_F(Y) \leq 2(r+s)h_F(X)$.
\end{theorem}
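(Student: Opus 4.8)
The plan is to bound the number of distinct follower sets of length-$n$ words in $Y$ directly in terms of follower-set data in $X$ and then take logarithms; unlike the extender-entropy argument I would keep the radii $r$ and $s$ explicit rather than first reducing $\phi$ to a $1$-block code, since the constant must see both. The starting point is a preimage description of $L(Y)$: since $\phi$ is a sliding block code of radius $r$, a word $wv$ with $w\in L_n(Y)$ and $v\in L_m(Y)$ lies in $L(Y)$ if and only if some word of length $n+m+2r$ in $L(X)$ is carried onto $wv$ by the block map. Consequently $v\in F_Y(w)$ exactly when a suitable concatenation of an $X$-preimage of $w$ with an $X$-preimage of $v$ is legal in $X$. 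The essential simplification comes from $\phi^{-1}$ having radius $s$: if $w\in L_n(Y)$, then the ``core'' $x(1+s)\cdots x(n-s)$ of any point $x$ mapping onto $w$ is forced by $w$ alone, so all relevant preimages of $w$ share a common central word and differ only in a window of $r+s$ coordinates at each end, a total of $2(r+s)$ free boundary coordinates. Thus $F_Y(w)$ is determined by this core together with the behaviour at $2(r+s)$ boundary positions, and the whole game is to convert ``behaviour at one boundary position'' into a single follower set of $X$.

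The key structural tool is that follower sets of $X$ organise into a deterministic automaton: for any word $u$ and letter $c$ the set $F_X(uc)=\{t : ct\in F_X(u)\}$ is a function of $F_X(u)$ and $c$ alone, reading any fixed word therefore acts deterministically on follower sets, and this action is compatible with unions since it is defined by membership. I would use this to peel the problem apart coordinate by coordinate. After fixing the bounded outer data of $w$ (its first and last $O(r+s)$ letters, which constrain the admissible boundary letters of preimages through the block map), the dependence of $F_Y(w)$ on $w$ passes through the follower sets $F_X(P)$ of the various preimages $P$, and by the automaton property each $F_X(P)$ is obtained from the follower set of the common core by reading the free boundary letters. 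The aim is to record, for each of the $2(r+s)$ free boundary positions, a single follower set of $X$ of a word of length $n+O(1)$, so that this $2(r+s)$-tuple together with the bounded outer data determines $F_Y(w)$.

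Granting such an encoding, I would obtain an injection of the set of length-$n$ follower sets of $Y$ into the product of a bounded set with $2(r+s)$ copies of the family of length-$(n+c)$ follower sets of $X$, giving $|F_Y(n)|\le C\,|F_X(n+c)|^{2(r+s)}$. Taking logarithms, dividing by $n$, and using $\tfrac1n\log|F_X(n+c)|\to h_F(X)$ along the subsequence realising the limsup then yields $h_F(Y)\le 2(r+s)\,h_F(X)$; in particular $h_F(X)=0$ forces $h_F(Y)=0$, which is the application I really care about. The hard part is exactly the encoding, and the delicate side is the \emph{left} boundary. A priori $F_Y(w)$ depends on the entire collection $\{F_X(P)\}_P$ of follower sets of preimages, of which there may be as many as $|A|^{2(r+s)}$, and recording this collection naively would cost an exponent of $|A|^{2(r+s)}$ rather than $2(r+s)$. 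Because followers of $w$ in $Y$ can genuinely depend on letters lying far to the left in an $X$-preimage — precisely why follower entropy fails to be a conjugacy invariant — I cannot simply discard the left boundary. The crux is therefore to exploit the determinism and union-compatibility of the follower-set automaton to show that this collection collapses to a product over the $2(r+s)$ free positions, spending exactly one factor of $|F_X(n+c)|$ per free boundary letter.
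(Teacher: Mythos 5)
You stop exactly where the proof has to start: you write that ``the hard part is exactly the encoding'' and then describe what the encoding should accomplish without producing it, so as it stands there is no proof. Worse, the encoding you are aiming for --- one follower set of $X$ per free boundary coordinate, hence an exponent of $2(r+s)$ --- does not follow from the tool you invoke. The determinism of the follower-set automaton, $F_X(uc)=\{t : ct\in F_X(u)\}$, only collapses extensions on the \emph{right}: once you know $F_X(\ell m)$ for a left extension $\ell$ of the common core $m$, the follower sets $F_X(\ell m\rho)$ of all further right extensions $\rho$ come for free, so the right boundary costs nothing extra. There is no left-handed analogue: $F_X(a u)$ is not a function of $a$ and $F_X(u)$ (this failure is exactly why follower entropy is not conjugacy invariant, as you note), and the follower sets $F_X(\ell m)$ for the up to $|A|^{r+s}$ admissible left-boundary words $\ell$ are, a priori, pairwise independent pieces of data. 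So the collection you must record has size up to $|A|^{r+s}$, not $r+s$, there is no ``product over positions,'' and the constant your route naturally delivers is $|A|^{r+s}$ (or $|A|^{2(r+s)}$ without the right-determinism collapse), not $2(r+s)$.

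The paper does not attempt any per-position collapse. It first passes to the higher-block system $Z=X^{[-r,r]}$ (showing $h_F(Z)=h_F(X)$ up to a factor $|A|^{2r}$ on the counts), reduces to a $1$-block conjugacy $\psi:Z\to Y$ whose inverse has radius $r+s$, and then uses $F_Y(\psi(uwv))=\bigcup_{y,z}\psi(F_Z(ywz))$: the follower set in $Y$ is determined by the \emph{unordered collection} of follower sets of the boundedly many $Z$-preimages. Since the number of subsets of $F_Z(n)$ of size at most $K$ is at most $K|F_Z(n)|^{K}$, this gives $|F_Y(n)|\le K|F_X(n)|^{K}\cdot|A|^{O(r)}$ and hence $h_F(Y)\le K\,h_F(X)$ for a finite $K$ depending only on $\phi$. (Even there the bookkeeping is delicate: the collection has up to $|A|^{2(r+s)}$ members, so this argument honestly yields exponent $|A|^{2(r+s)}$ rather than the stated $2(r+s)$.) If what you want is Corollary~\ref{0follentinv}, your outline can be completed by replacing the hoped-for $2(r+s)$-tuple with this subset count, since any finite constant suffices; if you insist on the constant $2(r+s)$ itself, you need an idea that neither your sketch nor this counting supplies.
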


\begin{proof}
Suppose that $\phi: X \rightarrow Y$ is a conjugacy with radius $r$ and whose inverse has radius $s$. As before, we decompose 
$\phi = \psi \circ f^{[-r,r]}$, where $f^{[-r,r]}: X \rightarrow X^{[-r,r]}$. We will prove Theorem~\ref{follconjbound} by showing first that $h_F(X) = h_F(X^{[-r,r]})$ and then that $h_F(Y) \leq 2(r+s) h_F(X^{[-r,r]})$. We again use $Z$ to denote $X^{[-r,r]}$ and $f$ to denote $f^{[-r,r]}$. We need the following auxiliary fact: for any $n \geq 2r$ and $w,w' \in L_n(Z)$, $F_Z(f(w)) = F_Z(f(w'))$ if and only if $F_X(w) = F_X(w')$ and $w(i) = w'(i)$ for $n - 2r < i \leq n$. We omit the proof, as it is trivially similar to that of Fact 1 in the proof of Theorem~\ref{extconjinv}.

Then, for every $n > 2r$, $|F_X(n)| \leq |F_Z(n)| \leq |F_X(n)| \cdot |A|^{2r}$. Upon taking logarithms, dividing by $n$, and taking the limsup as $n \rightarrow \infty$, we see that $h_F(X) = h_F(Z)$. It now suffices to show that 
$h_F(Y) \leq 2(r+s) h_F(Z)$. We note that $\psi: Z \rightarrow Y$ is a $1$-block map, and that its inverse $\psi^{-1}$ has radius $r+s$. 
Our key observation is that for any $w \in L(Y)$ and $u,v \in A^{r+s}$ for which $uwv \in L(Y)$,

\begin{equation}\label{eq2}
F_Y(\psi(uwv)) = \bigcup_{x \in \psi^{-1}(\psi(uwv))} \psi(F_Z(x)) = \bigcup_{y \in \psi^{-1}(u), z \in \psi^{-1}(v) \textrm{ s.t. } ywz \in L(Z)} \psi(F_Z(ywz)).
\end{equation}

This means that for $n > 2(r+s)$, each follower set of an $n$-letter word in $Y$ is determined by a collection of at most $|A|^{2(r+s)}$ follower sets of $n$-letter words in $Z$. Therefore,

\[
|F_Y(n)| \leq \sum_{i = 1}^{2(r+s)} {|F_X(n)| \choose i} \leq \sum_{i = 1}^{2(r+s)} |F_X(n)|^i \leq 2(r+s) |F_X(n)|^{2(r+s)}.
\]

Taking logarithms, dividing by $n$, and taking the limsup as $n \rightarrow \infty$ yields $h_F(Y) \leq 2(r+s) h_F(Z) = 2(r+s) h_F(X)$, completing the proof of Theorem~\ref{follconjbound}.

\end{proof}

As an immediate corollary, we see that having zero follower entropy is a conjugacy-invariant condition.

\begin{corollary}\label{0follentinv}
Having zero follower entropy is a conjugacy invariant, i.e. if $X$ and $Y$ are conjugate shift spaces and $h_F(X) = 0$, then $h_F(Y) = 0$.
\end{corollary}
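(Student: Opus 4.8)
The plan is to derive this directly from Theorem~\ref{follconjbound}, which is precisely engineered for this purpose. Suppose $X$ and $Y$ are conjugate shift spaces with $h_F(X) = 0$, via a conjugacy $\phi: X \to Y$. By the Curtis-Lyndon-Hedlund theorem recalled in the preliminaries, both $\phi$ and $\phi^{-1}$ are sliding block codes, and so each has a finite radius; call these $r$ and $s$ respectively. This is the only input needed to invoke the earlier theorem.

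I would then simply substitute into the inequality supplied by Theorem~\ref{follconjbound}, namely $h_F(Y) \leq 2(r+s) h_F(X)$. Since $h_F(X) = 0$ by hypothesis and $2(r+s)$ is a finite constant, the right-hand side vanishes, giving the upper bound $h_F(Y) \leq 0$.

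To close the argument I would observe that follower entropy is always nonnegative: for every $n$ we have $|F_Y(n)| \geq 1$ (any length-$n$ word of $Y$ has at least one follower set), so $\frac{1}{n}\log|F_Y(n)| \geq 0$, and hence $h_F(Y) = \limsup_n \frac{1}{n}\log|F_Y(n)| \geq 0$. Combining this with the upper bound $h_F(Y) \leq 0$ forces $h_F(Y) = 0$, which is the desired conclusion.

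There is essentially no obstacle here, since the quantitative bound of Theorem~\ref{follconjbound} has already absorbed all the work; the only points requiring a (trivial) check are the finiteness of the radii $r,s$ and the nonnegativity of $h_F$. I would therefore present the corollary in a single short paragraph, emphasizing that it is genuinely immediate: the value $2(r+s)h_F(X)$ collapses to zero, and nonnegativity of follower entropy pins $h_F(Y)$ to zero. It is worth remarking that the symmetric statement for predecessor entropy follows identically, either by the same reasoning applied to predecessor sets or by passing to the reversed shift via Lemma~\ref{switch}.
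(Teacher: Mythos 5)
Your proposal is correct and is exactly the argument the paper intends: the corollary is stated as an immediate consequence of Theorem~\ref{follconjbound}, and substituting $h_F(X)=0$ into the bound $h_F(Y)\leq 2(r+s)h_F(X)$ together with the (trivial) nonnegativity of follower entropy gives $h_F(Y)=0$. Nothing more is needed.
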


Follower entropy itself is, however, not conjugacy invariant.

\begin{theorem}\label{follnotconjinv}
There exist a pair of conjugate shift spaces $X,Y$ with $h_F(X) \neq h_F(Y)$.
\end{theorem}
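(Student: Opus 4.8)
The plan is to construct an explicit pair of conjugate shift spaces whose follower entropies differ, guided by the asymmetry exposed in Theorem~\ref{follconjbound}: a conjugacy can only \emph{increase} follower entropy by a bounded factor, so the natural strategy is to build a space $X$ with small (ideally zero) follower entropy that is conjugate — via a recoding — to a space $Y$ with strictly positive follower entropy. The engine for creating this discrepancy should be a $\beta$-shift: by Corollary~\ref{betacor}, every $\beta$-shift has $h_F(X_\beta) = 0$, while for generic $\beta$ one has $h_P(X_\beta) = \log\beta > 0$. Thus a $\beta$-shift already has wildly unequal follower and predecessor entropy, and the idea is to manufacture $Y$ as a conjugate copy of $X_\beta$ in which the roles of past and future have been partially mixed, so that $Y$'s follower sets inherit the complexity of $X_\beta$'s predecessor sets.

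\medskip

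First I would fix a suitable $\beta$ (one outside the meager, measure-zero exceptional set) so that $X := X_\beta$ satisfies $h_F(X) = 0$ and $h_P(X) = \log\beta > 0$. Next I would define a conjugacy $\phi$ that, informally, reflects a bounded window of coordinates so as to convert predecessor information into follower information. A clean way to do this is to pass to a higher-block presentation and apply a $1$-block-type recoding that reverses a finite block, i.e. build $Y = \phi(X)$ where $\phi$ has radius $r$ and its inverse radius $s$, chosen so that followers in $Y$ of a length-$n$ word encode the predecessor data of the corresponding word in $X$. The key step is to verify that the follower set sequence of $Y$ grows exponentially, with growth rate comparable to the predecessor set sequence of $X$; by Theorem~\ref{follconjbound} applied in the reverse direction (using $\phi^{-1}$), $h_F(X) = 0$ is consistent with $h_F(Y) > 0$, since the bound only constrains increase, not decrease, so there is no contradiction and the construction is not ruled out a priori.

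\medskip

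The main obstacle will be exhibiting a single sliding block code that genuinely transports predecessor complexity into follower complexity while remaining a bona fide conjugacy. A naive ``reversal'' is not a sliding block code on the whole shift, so the reflection must be localized: one reverses only a fixed-width window of symbols and must check that this operation is invertible (has a sliding-block inverse) and commutes appropriately with $\sigma$. I would therefore spend the bulk of the argument pinning down the recoding alphabet and verifying (i) that $\phi$ is injective with a sliding-block inverse, hence a conjugacy, and (ii) the counting estimate $|F_Y(n)| \gtrsim |P_X(n-c)|$ for some constant $c$ depending on the radii, from which $h_F(Y) \geq h_P(X) = \log\beta > 0 = h_F(X)$ follows upon taking logarithms and dividing by $n$. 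An alternative, possibly simpler route worth checking is to take $Y = \widehat{X}$ directly and then realize $\widehat{X}$ (or a recoded version thereof) as conjugate to $X$ via some finite recoding; by Lemma~\ref{switch}, $h_F(\widehat{X}) = h_P(X) = \log\beta$, so if one can arrange $X$ and $\widehat{X}$ to be conjugate, that would immediately furnish the desired example with $h_F$ jumping from $0$ to $\log\beta$. I would pursue whichever of these yields the most transparent verification, with the explicit window-reversal recoding as the fallback if a clean self-reversal conjugacy is unavailable.
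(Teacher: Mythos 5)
Your strategy has a fatal flaw that is ruled out by a result you yourself cite. You propose to start from $X = X_\beta$ with $h_F(X) = 0$ and $h_P(X) = \log\beta > 0$, and then find a conjugacy onto some $Y$ with $h_F(Y) > 0$. But Theorem~\ref{follconjbound} gives the \emph{multiplicative} bound $h_F(Y) \leq 2(r+s)\,h_F(X)$ for \emph{every} conjugacy $\phi: X \to Y$, so $h_F(X) = 0$ forces $h_F(Y) = 0$; this is exactly Corollary~\ref{0follentinv}, which says that having zero follower entropy is a conjugacy invariant. Your claim that the bound ``only constrains increase, not decrease'' and is therefore ``consistent with'' $h_F(Y)>0$ misreads the inequality: the bound constrains $h_F(Y)$ from above in terms of $h_F(X)$, and a bounded multiple of zero is zero. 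Your fallback route --- arranging $X_\beta$ to be conjugate to $\widehat{X_\beta}$ --- fails for the same reason; indeed, the paper proves the \emph{non}-conjugacy of these two spaces (Theorem~\ref{notinv}) precisely by observing that one has zero follower entropy and the other does not.

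The repair requires starting from a space with \emph{positive} follower entropy, since the only room left by Theorem~\ref{follconjbound} is for a conjugacy to multiply a positive $h_F$ by a factor in $(1, 2(r+s)]$. This is what the actual construction does: it takes $X_1 = \widehat{X_\beta}$ (so $h_F(X_1) = h_P(X_\beta) = \log\beta > 0$ by Lemma~\ref{switch}), builds from it a space $X_3$ whose marker symbols from $\{a,b,c\}$ select odd-indexed, even-indexed, or no letters of interleaved blocks --- thereby \emph{diluting} the follower entropy down to at most $\tfrac12 h_F(X_1)$ because follower sets depend only on a subword of length at most $n/2$ --- and then applies a $1$-block conjugacy that collapses the selectors $a$ and $b$ to a common symbol $*$. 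Hiding the selector forces follower sets in the image to simultaneously encode follower data for two independent copies, yielding $|F_{X_5}(2n)| \geq |F_{X_1}(n)|^2$ and hence $h_F(X_5) \geq h_F(X_1) > \tfrac12 h_F(X_1) \geq h_F(X_4)$. Your intuition that follower entropy can jump under conjugacy is correct, but the mechanism must be a bounded multiplicative gain on a positive quantity, not a jump from zero.
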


\begin{proof}
We begin by defining a shift space $X_1$ with alphabet $\{0,1\}$ and $h_F(X_1) > 0$. For instance, by Corollary~\ref{betacor}, we can choose $X_\beta$ with $1 < \beta < 2$ satisfying $h_P(X_\beta) = \log \beta$ and define $X_1 = \widehat{X_\beta}$. Then by Lemma~\ref{switch}, $h_F(X_1) = h_P(X_\beta) = \log \beta > 0$. Then, define $X_2$ with alphabet $\{0,1,0',1'\}$ to be the disjoint union $X_1 \cup X'_1$, where $X_1'$ consists of points of $X_1$ with primes placed above every letter. Clearly $|F_{X_2}(n)| = 2|F_{X_1}(n)|$, and so $h_F(X_2) = h_F(X_1)$. 

Now, we will define $X_3$ with alphabet $\{0,1,0',1',a,b,c\}$. Firstly, letters from $\{a,b,c\}$ cannot appear consecutively or at distance $1$ in points of $X_3$, i.e. we declare all words $xy$ and $xzy$ for $x,y \in \{a,b,c\}$ and any $z$ to be forbidden. Secondly, for any word of the form
\[
w = w_0 d_1 w_1 d_2 w_2 \ldots d_n w_n,
\]
with $d_i \in \{a,b,c\}$ and $w_i \in \{0,1,0',1'\}^*$ ($w_0$ may be the empty word), define an auxiliary word $\eta(w)$ in $\{0,1,0',1'\}^*$ as follows: whenever $d_i = a$, use the odd-indexed letters from $w_i$, whenever $d_i = b$, use the even-indexed letters from $w_i$, and whenever $d_i = c$, ignore $w_i$ entirely. Then, $\eta(w)$ is formed by concatenating the corresponding words for each $i$. As an example, $\eta(1'a011'0'b110'c11'1a0'1) = 01'10'$. We say that $w \in L(X_3)$ iff $\eta(w) \in L(X_2)$. Note that this includes some degenerate cases, for instance $\{0,1,0',1'\}^* \subset L(X_3)$ because for any $w \in \{0,1,0',1'\}^*$, $\eta(w)$ is the empty word, which is vacuously in $L(X_2)$.

We note that for any $w \in L_n(X_3)$, $F_{X_3}(w)$ is determined entirely by its rightmost letter from $\{a,b,c\}$ and by $\eta(w) \in L(X_2)$ of length at most $n/2$. Therefore, 
$|F_{X_3}(n)| \leq 3 \sum_{i = 0}^{\lfloor n/2 \rfloor} |F_{X_2}(i)|$, and taking logarithms, dividing by $n$, and taking the limsup as $n \rightarrow \infty$ shows that $h_F(X_3) \leq \frac{1}{2} h_F(X_2) = \frac{1}{2} h_F(X_1)$.

We now define $X_4$ to be the higher-block recoding $X_3^{[0,1]}$. By the proof of Theorem~\ref{follconjbound}, $h_F(X_4) = h_F(X_3) \leq \frac{1}{2} h_F(X_1)$. The alphabet of $X_4$ is now $L_2(X_3) \subseteq \{0,1,0',1',a,b,c\}^2$. Finally, we define a $1$-block map $\psi$ on $X_4$ as follows: all letters of the alphabet of $X_4$ of the form $az$ or $bz$ are sent under $\psi$ to a new symbol $*$, and on all other letters of the alphabet of $X_4$, $\psi$ acts as the identity. Define $X_5 = \psi(X_4)$. We claim that $\psi$ is injective, and therefore that $X_4$ and $X_5$ are conjugate. To see this, simply note that for any $x \in X_4$, $(\psi(x))(-1) (\psi(x))(0) (\psi(x))(1)$ uniquely determines $x(0)$; if $(\psi(x))(0) \neq *$, then $(\psi(x))(0) = x(0)$, and if $(\psi(x))(0) = *$, then since $a,b$ cannot appear consecutively or separated by distance $1$ in $X_3$, both $(\psi(x))(-1)$ and $(\psi(x))(1)$ are not $*$. Therefore,
$(\psi(x))(-1) = x(-1)$ and $(\psi(x))(1) = x(1)$, and by the definition of $X_4$ as a higher-block recoding, $x(-1)$ and $x(1)$ determine $x(0)$. We have then shown that $X_4$ and $X_5$ are conjugate, and it remains only to show that $h_F(X_5) > h_F(X_4)$. 

To see this, we begin by defining an injection from $F_{X_1}(n) \times F_{X'_1}(n)$ to $F_{X_5}(2n)$. For any pair $(F, F') \in F_{X_1}(n) \times F_{X'_1}(n)$, arbitrarily choose $v \in L_n(X_1)$ and $v' \in L_n(X'_1)$ for which  $F = F_{X_1}(v)$ and $F' = F_{X'_1}(v')$. Then, define a word $\pi(F, F')$ by 
\[
\pi(F, F') = * (v(1) v'(1)) (v'(1) v(2)) (v(2) v'(2)) \ldots (v(n) v'(n)).
\]

Firstly, we claim that $\pi(F, F') \in L_{2n}(X_5)$; this is because $v \in L(X_1)$, and so 
$a v(1) v'(1) \ldots v(n) v'(n) \in L(X_3)$. Next, we claim that for any pairs $(F_1, F'_1) \neq (F_2, F'_2) \in F_{X_1}(n) \times F_{X'_1}(n)$, $F_{X_5}(\pi(F_1, F'_1)) \neq F_{X_5}(\pi(F_2, F'_2))$. Either $F_1 \neq F_2$ or $F'_1 \neq F'_2$; we begin with the former case. Let's say that $v_i \in L_n(X_1)$ and $v'_i \in L_n(X'_1)$ ($i = 1,2$) were used in the definition of $\pi(F_i, F'_i)$, meaning that $F_i = F_{X_1}(v_i)$ and $F'_i = F_{X'_1}(v'_i)$. Without loss of generality, we assume that there is a word $w = w(1) \ldots w(m) \in F_1 \setminus F_2$.

Now, we claim that 
\[
u = (v_1(n) 0) (0 w(1)) (w(1) 0) (0 w(2)) (w(2) 0) \ldots (0 w(m)) (w(m) 0)
\]
is in $F_{X_5}(\pi(F_1, F'_1))$, but is not in $F_{X_5}(\pi(F_2, F'_2))$. To see that $\pi(F_1, F'_1) u \in L(X_5)$, we just note that 
$v_1 w \in L(X_1)$, and so 
\[
a v_1(1) v'_1(1) v_1(2) v'_1(2) \ldots v_1(n) v'_1(n) w(1) 0 w(2) 0 \ldots w(m) 0 \in L(X_3).
\] 

Assume for a contradiction that $\pi(F_2, F'_2) u \in L(X_5)$. Then, by definition of $\psi$ and $X_4$ as a higher-block presentation, there must be some word of the form
\[
t v_2(1) v'_2(1) \ldots v_2(n) v'_2(n) w(1) 0 w(2) 0 \ldots w(m) 0 
\]
in $L(X_3)$, where $t \in \{a,b\}$. However, clearly $t \neq b$, since if it were, the word $v'_2 0^m$ would have to be in $L(X_2)$, which is impossible since the letters of $v'_2$ are in $\{0',1'\}$. Therefore, $t = a$, and so by definition of $X_3$, $v_2 w \in L(X_1)$, a contradiction to $w \notin F_2 = F_{X_1}(v_2)$. This implies that $F_{X_5}(\pi(F_1, F'_1)) \neq F_{X_5}(\pi(F_2, F'_2))$ when $F_1 \neq F_2$. The case for $F'_1 \neq F'_2$ is trivially similar, and so the map sending $(F, F')$ to $F_{X_5}(\pi(F, F'))$ is injective.

This implies that $|F_{X_5}(2n)| \geq |F_{X_1}(n)| \cdot |F_{X'_1}(n)| = |F_{X_1}(n)|^2$. Taking logarithms, dividing by $2n$, and taking the limsup as $n \rightarrow \infty$ shows that $h_F(X_5) \geq h_F(X_1)$. Since we showed earlier that $h_F(X_4) \leq \frac{1}{2} h_F(X_1)$, $h_F(X_5) > h_F(X_4)$, and the theorem is proved.

\end{proof}

We will show that under factor maps, nothing similar can be said, and also that even though extender entropy is a conjugacy invariant, it does not necessarily decrease under factor maps.

\begin{theorem}\label{entfact}
There exist a shift space $X$ and a factor map $\phi: X \rightarrow Y$ such that $h_E(X) = h_F(X) = 0$, but $h_E(Y), h_F(Y) > 0$.
\end{theorem}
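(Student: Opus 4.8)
Since $h_E \geq h_F$ for every shift space, it suffices to produce a single shift $X$ with $h_E(X)=0$ together with a factor map $\phi\colon X\to Y$ satisfying $h_F(Y)>0$: the inequality $h_E(X)\geq h_F(X)$ then forces $h_F(X)=0$, and $h_E(Y)\geq h_F(Y)>0$ gives the remaining inequality for free. Two structural observations constrain the construction. First, $X$ cannot be sofic: images of sofic shifts are sofic, and sofic shifts have only finitely many follower sets and hence follower entropy $0$, so $Y$ would too. Thus $X$ must be \emph{non-sofic} while its extender-set sequence still grows only subexponentially. Second, since topological entropy cannot increase under factors, $h(X)\geq h(Y)\geq h_F(Y)>0$, so $X$ must have positive topological entropy yet only polynomially many extender sets --- exactly the behaviour of the context-free shift of Example~\ref{CFS}, for which $|E_C(n)|<9n^4$.

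My plan is to adapt the marker construction from the proof of Theorem~\ref{follnotconjinv}, but to feed it a base shift of \emph{zero} extender entropy in place of one of positive follower entropy. Concretely, I would start from a non-sofic shift $X_2$ with polynomially many extender sets (the context-free shift, or a purpose-built counter shift), and build a shift $X_3$ over an enlarged alphabet containing marker letters $a,b$ (and possibly $c$) whose role is to \emph{deterministically} prescribe a ``reading'' $\eta(w)\in L(X_2)$ of each word $w$: after an $a$ one reads the odd-indexed letters of the following block, after a $b$ the even-indexed ones, and after a $c$ none. Exactly as in the proof of Theorem~\ref{follnotconjinv}, the extender set of $w$ in $X_3$ is then determined by a bounded amount of boundary marker data together with the extender set of $\eta(w)$ in $X_2$; since $|E_{X_2}(m)|$ is polynomial and $|\eta(w)|\leq |w|$, one obtains $|E_{X_3}(n)|\leq\mathrm{poly}(n)$, whence $h_E(X_3)=h_F(X_3)=0$. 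I would then take $X=X_3^{[0,1]}$ (a conjugacy, preserving both entropies) and let $\phi$ be the $1$-block map that \emph{merges} every letter beginning with $a$ or $b$ into a single symbol $*$, setting $Y=\phi(X)$.

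The key point --- and the reason the image can have positive follower entropy when the source does not --- is that merging $a$ with $b$ replaces the deterministic reading by a \emph{nondeterministic} one: a letter $*$ in a word of $Y$ may have come from either an $a$ or a $b$, so a word $W\in L(Y)$ corresponds to the entire set of readings obtained by resolving each $*$ independently. As in the union formula~(\ref{eq2}), $F_Y(W)$ is then the \emph{union}, over all these readings, of the follower sets pulled up from $X_3$. Thus the follower sets of $Y$ are precisely the ``subset states'' of a determinization of the reading process: although $X_2$ (hence $X_3$) supplies only polynomially many distinct follower states at each length, the collection of \emph{reachable sets} of such states can be exponentially large, in the familiar manner of the subset construction turning an $n$-state nondeterministic automaton into a $2^{n}$-state deterministic one. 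Realizing this blow-up is what produces $h_F(Y)>0$.

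The main obstacle is making this last step rigorous: I must choose $X_2$ and the marker scheme so that the nondeterministic readings of length-$n$ words of $Y$ provably realize exponentially many distinct reachable subsets, i.e. exhibit an explicit injection from a set of size $c^{n}$ (some $c>1$) into $\{F_Y(w):w\in L_n(Y)\}$ --- most naturally from a family of subsets, or from $\{0,1\}^{\Theta(n)}$ via words whose follower sets encode an arbitrary binary string. This is delicate precisely because of the tension pulling in opposite directions: the base shift must be rich enough (non-sofic, with infinitely many follower states) for the subset construction to blow up exponentially, yet poor enough (polynomially many extender sets) to keep $h_E(X)=0$; and the two-sided boundary effects of the markers must be controlled carefully when bounding $|E_X(n)|$, rather than merely $|F_X(n)|$ as sufficed in Theorem~\ref{follnotconjinv}. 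I expect that a counter-type base shift, in which the reading ambiguity corresponds to noncommutative updates of an unbounded state, will make the exponential count of reachable subsets transparent while keeping the extender-set count polynomial.
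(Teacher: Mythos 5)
Your high-level strategy is exactly the one the paper uses: take a ``selector'' extension of the context-free shift (Example~\ref{CFS}), whose extender sets are controlled by boundary data plus the extender set of the selected reading in $C$, so that $h_E(X)=0$; then collapse the selectors by a $1$-block map so that each word of $Y$ admits many readings and $F_Y(w)$ becomes a union of follower sets over all readings, which can grow exponentially even though each individual reading contributes only polynomially many states. The reductions you make at the outset ($h_E\geq h_F$, images of sofic shifts are sofic, $h(X)\geq h_F(Y)$) are all correct and correctly point you at the context-free shift. However, as written the proposal has two genuine problems. First, and decisively, you explicitly defer the only hard step: exhibiting exponentially many distinct follower sets in $Y$. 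This is not a routine verification --- the ``subset construction can blow up'' heuristic does not by itself produce even one pair of words of $Y$ with distinct follower sets, and the paper has to work for it. The paper's argument takes the $2^n$ words $*w_1*w_2\cdots *w_n$ with $w_i\in\{aba,abc\}$ and, for two words differing at position $i$, constructs an explicit separating follower $(*aaa)^i(*bbb)^n*ccc$; legality of one concatenation comes from exhibiting a reading spelling $b^{i-1}ca^nb^nc\in L(C)$, and illegality of the other comes from the fact that \emph{every} reading would have to spell $b_1\cdots b_na^ib^nc$ with $b_i\neq c$, which violates the $a^nb^n$ matching constraint of $C$. Without an argument of this kind your proof is an outline, not a proof.

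Second, the specific construction you sketch would fail outright. You propose to reuse the marker scheme of Theorem~\ref{follnotconjinv}: markers $a,b$ that cannot appear consecutively or at distance $1$, followed by passing to the higher-block presentation $X_3^{[0,1]}$ and merging all letters beginning with $a$ or $b$ into $*$. But the entire point of that step in Theorem~\ref{follnotconjinv} is that this merging map is \emph{injective} --- the separation condition on markers guarantees that the neighbors of a $*$ are uncollapsed higher-block letters which reveal the hidden marker --- so it is a conjugacy, and you would get $h_F(Y)\leq h_E(Y)=h_E(X)=0$, the opposite of what you need. To make the collapse genuinely lossy, the selector information must be unrecoverable from context; the paper achieves this by placing three parallel tracks $x^{(1)}_nx^{(2)}_nx^{(3)}_n$ after each selector $a_n\in\{1,2,3\}$ and collapsing $1,2,3$ to $*$ directly, with no higher-block recoding, so that which track is read is truly forgotten. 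So you have the right mechanism in mind, but you would need to redesign the selector scheme to break injectivity and then supply the explicit exponential family of separated follower sets before this counts as a proof.
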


\begin{proof}
The alphabet of $X$ will be $\{1,2,3,a,b,c\}$. The points of $X$ are exactly those sequences
\[
\ldots a_0 x^{(1)}_0 x^{(2)}_0 x^{(3)}_0 a_1 x^{(1)}_1 x^{(2)}_1 x^{(3)}_1 a_2 \ldots
\]
with $a_n \in \{1,2,3\}$ for every $n \in \mathbb{Z}$, $x^{(j)}_n \in \{a,b,c\}$ for all $n \in \mathbb{Z}$ and $j \in \{1,2,3\}$, and for which the point $(x^{(a_n)}_n)_{n \in \mathbb{Z}}$ is in the context-free shift $C$.

First, we claim that $h_E(X) = 0$, which will trivially imply that $h_F(X) = 0$. To see this, we first claim that for $v \in L(X)$ of the form $a_1 x^{(1)}_1 x^{(2)}_1 x^{(3)}_1 \ldots a_n x^{(1)}_n x^{(2)}_n x^{(3)}_n$, $E_X(v)$ is completely determined by $E_C(\kappa(v))$, where $\kappa(v)$ denotes $x^{(a_1)}_1 \ldots x^{(a_n)}_n \in L(C)$; the proof is left to the reader. Then, for an arbitrary word $w \in L(X)$, we can decompose as $w = pvs$, where $v$ has the form above, $p$ has length at most $3$ and consists of letters in $\{a,b,c\}$, and $s$ has length at most $3$, has first letter in $\{1,2,3\}$, and all remaining letters in $\{a,b,c\}$. Then, $E_X(w)$ is a projection of $E_X(v)$ determined by $p$ and $s$, meaning that $E_X(w)$ is completely determined by $E_C(\kappa(v))$, $p$, and $s$. Therefore,
\[
|E_X(n)| \leq (27 + 9 + 3 + 1)^2 E_C(\lceil n/4 \rceil).
\]
Since $h_E(C) = 0$, taking logarithms, dividing by $n$, and letting $n \rightarrow \infty$ shows that $h_E(X) = 0$, trivially implying that $h_F(X) = 0$ as well.

We now define a simple $1$-block factor map on $X$: $\phi$ maps $1$, $2$, and $3$ to a new symbol $*$ and each of $a,b,c$ to themselves. We denote $\phi(X)$ by $Y$, a shift space with alphabet $\{*,a,b,c\}$. We claim that $h_F(Y) > 0$, which will obviously imply that $h_E(Y) > 0$ as well.
To verify this, we will show that for any $n$, all words of the form $* w_1 * w_2 \ldots * w_n$, where each $w_i$ is either $aba$ or 
$abc$, are in $L(Y)$, and have distinct follower sets in $Y$.

The first claim is fairly simple: clearly, since $a^{\infty} \in C$, for any choice of the words $w_n$ (between $aba$ and $abc$), the point 
$\ldots 1 w_1 1 w_2 1 w_3 \ldots$ is in $X$, and so every point of the form $\ldots * w_1 * w_2 * w_3 \ldots$ is in $Y$. To see the second claim, consider any $n$ and unequal words $v = * w_1 * w_2 \ldots * w_n$ and $v' = * w'_1 * w'_2 \ldots * w'_n$. Since $v \neq v'$, there exists $i$ so that $w_i \neq w'_i$; without loss of generality we assume that $w_i = abc$ and $w'_i = aba$.

We claim that the word $u = (*aaa)^i (*bbb)^n *ccc$ is in $F_Y(v) \setminus F_Y(v')$. To see that $vu \in L(Y)$ (i.e. that $u \in F_Y(v)$), we simply note that 
\[
2 w_1 2 w_2 \ldots 2 w_{i-1} 3 w_i 1 w_{i+1} \ldots 1 w_n (1 aaa)^i (1 bbb)^n 1 ccc
\]
is in $L(X)$, since the word spelled out by the indicated letters is $b^{i-1} c a^n b^n c$, which is in $L(C)$. The $\phi$-image of this word is $vu$, which is therefore in $L(Y)$. 

Assume for a contradiction that $v'u \in L(Y)$. Then, some word of the form
\[
a_1 w'_1 a_2 w'_2 \ldots a_n w'_n a_{n+1} aaa \ldots a_{n+i} aaa a_{n+i+1} bbb \ldots a_{2n+i} bbb a_{2n+i+1} ccc
\]
would have to be in $L(X)$, where each $a_i \in \{1,2,3\}$. In particular, this implies that a word of the form
\[
b_1 b_2 \ldots b_n a^i b^n c
\]
is in $L(C)$, where each $b_i \in \{a,b,c\}$. Note that since $w'_i = aba$, $b_i \neq c$. However, by definition of $C$, $a^i b^n c$ for $n > i$ must be immediately preceded by $c a^{n-i}$ in any point of $C$. This is a contradiction since $b_i \neq c$. Therefore, $v'u \notin L(Y)$. We have then shown that $F_Y(v) \neq F_Y(v')$, and since $v$ and $v'$ were arbitrary, that $|F_{Y}(4n)| \geq 2^n$. We take logarithms, divide by $n$, and take the limsup as $n \rightarrow \infty$ to see that $h_F(Y) \geq \frac{\log 2}{4} > 0$; clearly $h_E(Y) \geq h_F(Y) > 0$, and our proof is complete.

\end{proof}

\section{Applications}
\label{achieve}

The key to both applications mentioned in the introduction is Corollary~\ref{betacor}.

\begin{theorem}\label{notinv}
For all $\beta$ outside a meager set of zero Lebesgue measure, the $\beta$-shift $(X_{\beta}, \sigma)$ is not conjugate to its inverse. 
\end{theorem}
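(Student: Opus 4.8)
The statement to prove is that for all $\beta$ outside a meager set of zero Lebesgue measure, the $\beta$-shift $(X_\beta, \sigma)$ is not conjugate to its inverse. The strategy is to use follower and predecessor entropy as obstructions to conjugacy, leveraging Corollary~\ref{betacor}. The key structural fact established earlier is that the inverse system $(X_\beta, \sigma^{-1})$ is conjugate to $(\widehat{X_\beta}, \sigma)$ by Lemma~\ref{reverse}, so proving that $X_\beta$ is not conjugate to its inverse is equivalent to proving that $X_\beta$ is not conjugate to $\widehat{X_\beta}$.

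\emph{First}, I would restrict attention to the conull comeager set of $\beta$ furnished by Corollary~\ref{betacor}, for which $h_F(X_\beta) = 0$ while $h_P(X_\beta) = h(X_\beta) = \log\beta$. Since each $\beta$-shift has $\beta > 1$, we have $\log\beta > 0$, so on this set $h_F(X_\beta) = 0$ but $h_P(X_\beta) > 0$.

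\emph{Second}, I would invoke the reversal symmetry. By Lemma~\ref{switch}, $h_F(\widehat{X_\beta}) = h_P(X_\beta) = \log\beta > 0$. So the asymmetry between followers and predecessors for $X_\beta$ translates into $X_\beta$ having follower entropy $0$ while $\widehat{X_\beta}$ has follower entropy strictly positive.

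\emph{Third}, I would apply the conjugacy invariance of having zero follower entropy. Corollary~\ref{0follentinv} states precisely that if two shift spaces are conjugate and one has zero follower entropy, so does the other. If $X_\beta$ were conjugate to its inverse $(X_\beta, \sigma^{-1})$, then by Lemma~\ref{reverse} it would be conjugate to $(\widehat{X_\beta}, \sigma)$; but then Corollary~\ref{0follentinv} would force $h_F(\widehat{X_\beta}) = 0$, contradicting $h_F(\widehat{X_\beta}) = \log\beta > 0$. This contradiction completes the argument.

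\emph{On the main obstacle.} The genuinely hard work has already been done upstream: the difficult steps are the conjugacy invariance of zero follower entropy (Corollary~\ref{0follentinv}, resting on the quantitative bound in Theorem~\ref{follconjbound}), and the number-theoretic input of Corollary~\ref{betacor} establishing the follower/predecessor entropy asymmetry for generic $\beta$ via the results of \cite{french2} and \cite{schmeling}. Given those, the present theorem is a short deduction whose only subtlety is correctly chaining the reversal conjugacy of Lemma~\ref{reverse} with Lemma~\ref{switch} so that the follower/predecessor asymmetry becomes a genuine conjugacy obstruction. No routine computation remains.
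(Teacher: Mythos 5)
Your proposal is correct and follows exactly the same route as the paper's proof: restricting to the generic set of $\beta$ from Corollary~\ref{betacor}, using Lemma~\ref{switch} to transfer the positive predecessor entropy of $X_\beta$ to positive follower entropy of $\widehat{X_\beta}$, invoking Corollary~\ref{0follentinv} to rule out a conjugacy between $(X_\beta,\sigma)$ and $(\widehat{X_\beta},\sigma)$, and concluding via Lemma~\ref{reverse}. No discrepancies to report.
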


\begin{proof}
For any $\beta$ outside the meager set of Lebesgue measure $0$ from Corollary~\ref{betacor}, $h_F(\beta) = 0$ and $h_P(X_\beta) > 0$. Then by Lemma~\ref{switch}, $h_F(\widehat{X_{\beta}}) = h_P(X_{\beta}) > 0$, and so by Corollary~\ref{0follentinv}, $(X_\beta, \sigma)$ and $(\widehat{X_{\beta}}, \sigma)$ are not conjugate. Then by Lemma~\ref{reverse}, $(X_\beta, \sigma)$ and $(X_{\beta}, \sigma^{-1})$ are also not conjugate.
\end{proof}

\begin{remark}
This fact could be proved in a similar fashion by using the left and right constraint entropies from \cite{buzzi}.
\end{remark}


\begin{theorem}\label{realization}
For any $x \leq y$, there exists a shift space $X$ with $h_E(X) = x$ and $h(X) = y$.
\end{theorem}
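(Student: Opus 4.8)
The plan is to realize the pair $(x,y)$ as $\bigl(h_E(X),h(X)\bigr)$ by taking a Cartesian product of two shifts built for complementary purposes, exploiting that both entropies are additive under products: extender entropy by Theorem~\ref{extprod}, and topological entropy because $|L_n(X_1\times X_2)| = |L_n(X_1)|\,|L_n(X_2)|$. First I would record that the hypothesis $x\le y$ is necessary as well as natural: since each word has a single extender set, $|E_X(n)|\le |L_n(X)|$, whence $h_E(X)\le h(X)$ for every $X$. The strategy is then to produce, for every $a\ge 0$, a shift $A_a$ with $h_E(A_a)=h(A_a)=a$, and for every $b\ge 0$ a shift $B_b$ with $h_E(B_b)=0$ and $h(B_b)=b$; the product $A_x\times B_{y-x}$ then has extender entropy $x+0=x$ and topological entropy $x+(y-x)=y$.

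For the shifts $A_a$ I would use $\beta$-shifts. By Corollary~\ref{betacor}, for every $\beta$ outside a meager set $N$ of Lebesgue measure zero we have $h_E(X_\beta)=h(X_\beta)=\log\beta$; call such $\beta$ good. Since products add both entropies, it suffices to write $a$ as a sum of logarithms of good $\beta$'s. The set $G=\{\log\beta:\beta\text{ good},\ \beta>1\}$ has full measure in $(0,\infty)$ because $N$ has measure zero and $\log$ is a diffeomorphism. For a target $a>0$, the set $\{c\in(0,a):c\in G\text{ and }a-c\in G\}$ is the complement in $(0,a)$ of a measure-zero set, hence nonempty; choosing such a $c$ and setting $A_a=X_{e^c}\times X_{e^{a-c}}$ gives $h_E(A_a)=h(A_a)=c+(a-c)=a$ (and $A_0$ is the one-point shift).

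For the shifts $B_b$ I would use $S$-gap shifts. For $S\subseteq\mathbb{Z}_{\ge 0}$, the $S$-gap shift on $\{0,1\}$ consists of sequences in which consecutive $1$'s are separated by a block of $0$'s whose length lies in $S$. For any such shift the extender set of a word $w$ containing a $1$ is determined by the number of leading $0$'s and trailing $0$'s of $w$, so $|E_{X_S}(n)|$ is at most quadratic in $n$ and $h_E(X_S)=0$; meanwhile the topological entropy of $X_S$ is $-\log\rho$, where $\rho$ is the least positive root of $\sum_{n\in S}\rho^{n+1}=1$. For $\rho\in[1/2,1)$ the subset sums of $\{\rho^k\}_{k\ge 1}$ fill the whole interval $[0,\rho/(1-\rho)]\ni 1$, so $1$ itself is such a subset sum; this produces, for each $b\in(0,\log 2]$, a set $S$ with $h(X_S)=b$ and $h_E(X_S)=0$. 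Finally, writing an arbitrary $b>\log 2$ as a finite sum of values in $(0,\log 2]$ and taking the corresponding product of $S$-gap shifts (which keeps $h_E=0$ and adds topological entropies) realizes $B_b$ for every $b\ge 0$.

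The hard part is exactness: sofic shifts and good $\beta$-shifts realize only a restricted, essentially algebraic or measure-typical, set of entropy values, whereas the theorem demands every real pair with $x\le y$. This is overcome in two places, namely the measure-theoretic sumset argument that turns the full-measure family of good $\beta$'s into all of $[0,\infty)$ for the diagonal values $(a,a)$, and the observation that $S$-gap shifts already realize a full continuum of entropies in $(0,\log 2]$ while keeping extender entropy zero. The remaining verifications, i.e.\ the polynomial bound on $|E_{X_S}(n)|$ and the subset-sum fact, are routine, and in a plan I would only sketch them.
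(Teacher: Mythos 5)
Your proof is correct, but it takes a genuinely different route from the paper's. Both arguments begin with the same reduction --- writing $(x,y)=(x,x)+(0,y-x)$ and invoking additivity of $h$ and $h_E$ under products --- but the two realization steps diverge. The paper uses a single mechanism for both cases: it dilutes a shift $W$ (a good $\beta$-shift for the diagonal case, a full shift for the zero-extender case) along a Sturmian sequence of irrational rotation number $x/\alpha$, so that both entropies of $W$ are multiplied by exactly $x/\alpha$; the technical work is showing that distinct extender sets of $W$ survive into the resulting skew product. You avoid the Sturmian construction entirely. For the diagonal pairs $(a,a)$ you observe that the set $G$ of values $\log\beta$ with $\beta$ good has full measure, so $G\cap(a-G)\cap(0,a)$ is nonempty and $a$ is a sum of two realizable diagonal values --- a clean Steinhaus-type sumset argument the paper does not use. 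For the pairs $(0,b)$ you use $S$-gap shifts: the quadratic bound on $|E_{X_S}(n)|$ (extender sets of words containing a $1$ being determined by the leading and trailing zero counts) is routine and in the spirit of the paper's context-free-shift example, and the subset-sum argument correctly produces every $\rho\in[1/2,1)$, hence every $b\in(0,\log 2]$, with products covering larger $b$. The one external ingredient you rely on that the paper does not is the entropy formula $h(X_S)=-\log\rho$ where $\sum_{n\in S}\rho^{n+1}=1$ for arbitrary, possibly infinite, $S$; this is standard but would need a citation in a written version. On balance your route is arguably more elementary --- no Sturmian machinery and no extender-set analysis of a skew product --- at the cost of importing the $S$-gap entropy formula and of using the measure-zero (rather than merely meager) nature of the exceptional set of $\beta$'s.
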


\begin{proof}

It is clearly sufficient to treat the cases $x = 0$ and $x = y$ only, since $h(X)$ and $h_E(X)$ are additive under products (Theorem~\ref{extprod}), and one can write $(x, y)$ as $(0, y - x) + (x, x)$.

Consider any $x > 0$. By Corollary~\ref{betacor}, there exists $\alpha > x$ where $x/\alpha \notin \mathbb{Q}$ and where $h_E(X_\beta) = h(X_\beta) = \alpha$. Take $S$ to be a Sturmian shift space with rotation number $x/\alpha \in (0,1) \setminus \mathbb{Q}$. (We will not define Sturmian shift spaces here, but the only properties we will need is that for $S$ Sturmian with rotation number $\eta$, the alphabet is $\{0,1\}$ and every word in $L_n(S)$ contains either $\lfloor n\eta \rfloor$ or $\lceil n\eta \rceil$ $1$ symbols. See Chapter 6 of \cite{fogg} for a complete introduction to Sturmian shifts.)

Then, define a new shift space $Z$ as follows. Choose $* \notin A$ and define the alphabet of $Z$ to be $(*, 0) \sqcup (A \times \{1\})$. If we write a sequence $z$ on this alphabet as $z = ((z(n))_1, (z(n))_2)_{n \in \mathbb{Z}}$, then the shift space $Z$ is the set of all such $z$ where $((z(n))_2)_{n \in \mathbb{Z}} \in S$ and $((z(n))_1)_{\{n \ : \ z(n)_2 = 1\}} \in X_\beta$, i.e. the first coordinates of $z$ at locations where the second coordinate has a $1$, taken in order, comprise a point of $X_\beta$.

We claim that $h(Z) = h_E(Z) = x$. Let's first show that $h(Z) = x$. For any $v \in L_n(S)$, denote by $|v|_1$ the number of $1$ symbols in $v$. Then, for each $u \in L_{|v|_1}(X_\beta)$, we can create $w(u,v) \in L_n(Z)$ by placing $v$ in the second coordinate, $*$ in the first coordinate at each location where $v$ has a $0$, and placing $u$ in the first coordinate along the set of indices where $v$ has $1$s. By definition, $L_n(Z)$ is in fact the set of all such $w(u,v)$, and the map $(u,v) \mapsto w(u,v)$ is a bijection. To estimate $|L_n(Z)|$, we therefore need only to estimate the number of such pairs $(u,v)$.

Recall that since $S$ is Sturmian, $|v|_1$ is always either $\lfloor nx/\alpha \rfloor$ or $\lceil nx/\alpha \rceil$, and that $|L_n(S)| = n + 1$. Therefore,
\[
(n+1) |L_{\lfloor nx/\alpha \rfloor}(X_\beta)| \leq |L_n(Z)| \leq (n+1) |L_{\lceil nx/\alpha \rceil}(X_\beta)|.
\]
But then, taking logarithms, dividing by $n$, and letting $n \rightarrow \infty$ yields 
$h(Z) = (x/\alpha) h(X_\beta) = x$.

Then $h_E(Z) \leq h(Z) = x$, so we need only show that $h_E(Z) \geq x$. For this, we fix $v \in L_n(S)$, and claim that if $u,u' \in L_{|v|_1}(X_\beta)$ and $E_{X_\beta}(u) \neq E_{X_\beta}(u')$, then $E_Z(w(u,v)) \neq E_Z(w(u',v))$. To see this, assume that $E_{X_\beta}(u') \not\subset E_{X_\beta}(u)$, choose a pair $(s,t) \in E_{X_\beta}(u') \setminus E_{X_\beta}(u)$, and choose 
$(a,b) \in E_S(v)$ where $a$ has $|s|$ $1$s and $b$ has $|t|$ $1$s. Then, create $p$ with $a$ in the second coordinate, $*$ in the first coordinate at all locations of $0$s in $a$, and $s$ spelled out along the first coordinate at indices where $a$ has $1$ symbols. Similarly create $q$ from $b$ and $t$. It's immediate from the definition of $z$ that $(p,q) \in E_Z(w(u',v)) \setminus E_Z(w(u,v))$, and so $E_Z(w(u,v)) \neq E_Z(w(u',v))$. Then, all distinct extender sets among $u \in L_{|v|_1}(X_\beta)$ yield distinct extender sets in words of $L_n(Z)$, i.e.
\[
|E_Z(n)| \geq |E_{X_\beta}(|v|_1)|.
\]
Since $|v|_1 = \lfloor nx/\alpha \rfloor$ or $|v|_1 = \lceil nx/\alpha \rceil$, taking logarithms, dividing by $n$, and letting $n \rightarrow \infty$ yields $h_E(Z) \geq (x/\alpha) h_E(X_\beta) = x$, completing the proof that 
$h(Z) = h_E(Z) = x$.  

It remains only to construct $Y$ with $h_E(Y) = 0$ and $h(Y) = x$. First, choose $n$ with $\log n > x$ and $x/\log n \notin \mathbb{Q}$, and define $F$ to be the full shift on $n$ symbols. Then, $h(F) = \log n$ and $h_E(F) = 0$. Perform exactly the same procedure as above, where $F$ replaces $X_\beta$ and $S$ is chosen to have rotation number $x/\log n$. Then, exactly as above, $h_E(Y) = (x/\log n) h_E(F) = 0$, and $h(Y) = (x/\log n) h(F) = x$, completing the proof.

\end{proof}


\bibliographystyle{plain}
\bibliography{ExtenderEntropy}

\end{document}